\newtheorem{theorem}{Theorem}[section]
\newtheorem*{theoremd*}{Definition/Theorem}
\newtheorem*{theorem1*}{Theorem 1}
\newtheorem*{theorem*}{Theorem 1}
\newtheorem*{problem*}{Problem}
\newtheorem*{question*}{Question}
\newtheorem*{remarks*}{Remarks}
\newtheorem*{claim*}{Claim}
\newtheorem*{remark*}{Remark}
\newtheorem{remark}{Remark}[section]
\newtheorem*{hLt*}{Hard Lefschetz Theorem}
\newtheorem*{HRR*}{Hodge-Riemann Bilinear Relations}
\newtheorem*{basisthm*}{Basis Theorem}
\newtheorem*{relbasisthm*}{Relative Basis Theorem}
\newtheorem*{primdecomp*}{Primitive Decomposition Theorem}
\newtheorem*{PD*}{Poincar\'e Duality}
\newtheorem*{JBC*}{Watanabe's Bold Conjecture}
\newtheorem{proposition}{Proposition}[section]
\newtheorem{lemma}{Lemma}[section]
\newtheorem*{corollary1*}{Corollary 1}
\newtheorem*{corollary*}{Corollary}
\newtheorem*{uptp}{Universal Property of Tensor Products}
\newcommand{\mSpec}{\operatorname{mSpec}}
\newcommand{\GL}{\operatorname{GL}}
\newcommand{\Maps}{\operatorname{Maps}}
\newcommand{\Z}{{\mathbb Z}}
\newcommand{\N}{{\mathbb N}}
\renewcommand{\P}{{\mathbb P}}
\newcommand{\F}{{\mathbb F}}
\numberwithin{equation}{section}
\begin{document}

\title[Equivariant Coinvariant Ring]{A GKM Description of the Equivariant Coinvariant Ring of a Pseudo-Reflection Group}
\author{Chris McDaniel}
\address{Dept. of Math. and Comp. Sci.\\
Endicott College\\
Beverly, MA 01915}
\email{cmcdanie@endicott.edu}

%\begin{frontmatter}

% Title, authors and addresses

% use the thanksref command within \title, \author or \address for footnotes;
% use the corauthref command within \author for corresponding author footnotes;
% use the ead command for the email address,
% and the form \ead[url] for the home page:
%\title{The strong Lefschetz property for coinvariant rings of finite reflection groups\thanksref{label1}}
%\thanks[label1]{}
%\author{Chris R. McDaniel\corauthref{cor1}\thanksref{label2}}
%\thanks[label2]{}
%\corauth[cor1]{}
% \address{Address\thanksref{label3}}
% \thanks[label3]{}

%\title{Generalized 1-skeleta and a lifting result}
% use optional labels to link authors explicitly to addresses:
% \author[label1,label2]{}
% \address[label1]{}
% \address[label2]{}

%\author{Chris McDaniel}
%\address{Dept. of Math. and Comp. Sci., Endicott College, MA 01915}
%\ead{mcdaniel.chris@gmail.com}
%\ead[url]{http://www.math.umass.edu/~mcdaniel/}

\begin{abstract} 
We identify the equivariant coinvariant ring of a pseudo-reflection group with its image under the localization map.  We then show that this image can be realized as the equivariant cohomology of a sort linear hypergraph, analogous to a GKM 1-skeleton.
% Text of abstract
\end{abstract}
\maketitle

%\begin{keyword}
%1-skeleton \sep GKM \sep lifting \sep projection \sep simple polytope 
% keywords here, in the form: keyword \sep keyword

% PACS codes here, in the form: \PACS code \sep code
%\PACS 
%\end{keyword}
%\end{frontmatter}
%\tableofcontents

\section{Introduction}
Let $\F$ be an algebraically closed field, $V=\F^n$ a finite dimensional vector space over that field, and let $R=\F[V]$ be the ring of polynomial functions on $V$.  Let $W\subset\GL(V)$ be any finite group with $|W|\in\F^\times$.  Then $W$ acts on $R$ by $w\cdot f(v)=f(w^{-1}(v))$.  Let $R^W\subset R$ denote the graded subring of $W$-invariant polynomials, and let $J_W\coloneqq \left(R^W\right)^+\cdot R$ denote the ideal in $R$ generated by the $W$-invariants of strictly positive degree.  Let $R_W\coloneqq R/J_W$ denote the \emph{coinvariant ring} of $W$.  The \emph{equivariant coinvariant ring} of $W$ is the ring $R\otimes_{R^W}R$.  A theorem of Chevalley and Shephard-Todd states that $R^W$ is a polynomial ring (which implies that $R$ is a free module over $R^W$), if and only if $W$ is generated by pseudo-reflections.  It follows that the equivariant coinvariant ring of a pseudo-reflection group is free as a left $R$ module.   

For each $x\in W$ define the twisted multiplication map $\mu_x\colon R\otimes_{R^W}R\rightarrow R$ by $\mu_x(f\otimes g)\coloneqq f\cdot x(g)$.  Putting these maps together yields the \emph{localization map}
\begin{equation}
\label{eq:Localization}
\xymatrixrowsep{1pc}\xymatrix{\mu\colon R\otimes_{R^W}R\ar[r] & \bigoplus_{x\in W}R\\
F\ar@{|->}[r] & \left(\mu_x(F)\right)_{x\in W}\\}
\end{equation}
It will be convenient to identify the ring $\bigoplus_{x\in W}R$ with the ring $\operatorname{Maps}(W,R)$ consisting of all maps from $W$ into $R$, so that a tuple $\left(F_x\right)_{x\in W}$ is identified with the map $\left\{x\mapsto F_x\right\}$.  

A \emph{co-root} associated to a pseudo-reflection $s\in W$ is any non-zero linear function $\ell_s\in V^*$ which vanishes on the $s$-fixed hyperplane $H_s\subset V$.  A co-root also has an associated map $L_s\colon W\rightarrow R$ defined by $L_s(x)=x(\ell_s)$.  Let $s(W)\subset W$ denote the set of all pseudo-reflections in $W$ and for any group element $x\in W$ let $|x|\in\N$ denote its order.

Here is our main result:

\begin{theorem}
\label{thm:Main}
The localization map is injective, and its image is equal to the subset 
$$\mathcal{H}_W=\left\{F\colon W\rightarrow R\left| \sum_{j=0}^{|s|-1}\frac{F(x\cdot s^j)}{L^i_s(x\cdot s^j)}\in R \ \ \forall \ x\in W, \ \ \forall \ s\in s(W), \ \ \forall \ i\leq |s|-1\right.\right\}.$$
\end{theorem}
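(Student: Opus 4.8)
The plan is to prove injectivity and the two inclusions separately.

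\medskip

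\emph{Injectivity.} Since $W$ acts faithfully on $R$, the extension $\operatorname{Frac}(R)/\operatorname{Frac}(R^W)$ is Galois with group $W$ — the containment $\operatorname{Frac}(R^W)\subseteq\operatorname{Frac}(R)^W$ is an equality by a degree count, using that $R$ is spanned over $R^W$ by $|W|$ elements (Chevalley--Shephard--Todd). The classical Galois--algebra isomorphism identifies $\operatorname{Frac}(R)\otimes_{\operatorname{Frac}(R^W)}\operatorname{Frac}(R)$ with $\operatorname{Maps}(W,\operatorname{Frac}(R))$ via exactly the formula defining $\mu$. Because $R$ is $R^W$-free, $R\otimes_{R^W}R$ is $R^W$-torsion free, hence embeds in its localization $\operatorname{Frac}(R)\otimes_{\operatorname{Frac}(R^W)}\operatorname{Frac}(R)$; as $\mu$ is the restriction of the above isomorphism, it is injective and $\operatorname{im}(\mu)$ is a full-rank $R$-submodule of $\operatorname{Maps}(W,R)$. \emph{The inclusion $\operatorname{im}(\mu)\subseteq\mathcal{H}_W$.} The ring $R\otimes_{R^W}R$ is generated as a left-$R$-algebra by the elements $1\otimes\ell$, $\ell\in V^*$, and $\mu$ is a ring homomorphism with $\mu(f\otimes1)$ the constant map $f$ and $\mu(1\otimes\ell)=L_\ell$, where I write $L_\ell(x):=x(\ell)$; so $\operatorname{im}(\mu)=R[L_\ell:\ell\in V^*]$ and it suffices to check that $\mathcal{H}_W$ is an $R$-subalgebra of $\operatorname{Maps}(W,R)$ containing $R$ and every $L_\ell$. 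Fix $s\in s(W)$, $d=|s|$; as $d\mid|W|\in\F^\times$, $\F\ell_s$ is an eigenline for the action of $s$ on $V^*$, $s(\ell_s)=\eta\ell_s$ with $\eta$ a primitive $d$-th root of unity, and $V^*=\F\ell_s\oplus U$ for an $s$-fixed complement $U$. For $F\colon W\to R$ and $x\in W$ put $G_i:=\sum_{j=0}^{d-1}\eta^{-ij}F(xs^j)$ and $\ell_0:=x(\ell_s)$; then $L_s^i(xs^j)=\eta^{ij}\ell_0^{\,i}$, the $(x,s,i)$-condition reads ``$\ell_0^{\,i}\mid G_i$ in $R$'', and $F(xs^j)=\tfrac1d\sum_i\eta^{ij}G_i$. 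Using $\sum_{j=0}^{d-1}\eta^{mj}=d\,[\,d\mid m\,]$: constants and each $L_\ell$ (write $\ell=c\ell_s+u$) satisfy all these divisibilities by direct expansion; and if $F,F'\in\mathcal{H}_W$ have transforms $G_a(F),G_b(F')$, then the transform of $FF'$ at $(x,s,i)$ equals $\tfrac1d\sum_{a+b\equiv i\,(d)}G_a(F)G_b(F')$, and each summand is divisible by $\ell_0^{\,i}$ because $\ell_0^{\,a}\mid G_a(F)$, $\ell_0^{\,b}\mid G_b(F')$ and $a+b\in\{i,i+d\}$. Hence $\mathcal{H}_W$ is multiplicatively closed, so the inclusion holds.

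\medskip

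\emph{The inclusion $\mathcal{H}_W\subseteq\operatorname{im}(\mu)$} is the heart of the matter. Both sides are full-rank $R$-submodules of $\operatorname{Maps}(W,R)\cong R^{|W|}$ with $\operatorname{im}(\mu)$ free, and $R$ is a polynomial ring, so $R=\bigcap_{\operatorname{ht}\mathfrak p=1}R_\mathfrak p$; it therefore suffices to prove $(\operatorname{im}\mu)_\mathfrak p=(\mathcal{H}_W)_\mathfrak p$ for each height-one prime $\mathfrak p$ of $R$. Fixing a homogeneous $R^W$-basis $\{b_k\}$ of $R$, we have $\operatorname{im}(\mu)=\bigoplus_k R\,L_{b_k}$, whose transition matrix into the standard basis of $\operatorname{Maps}(W,R)$ is $\big(x(b_k)\big)_{x\in W,k}$; so the colength of $(\operatorname{im}\mu)_\mathfrak p$ in $\operatorname{Maps}(W,R_\mathfrak p)$ is $v_\mathfrak p(\delta)$, where $\delta:=\det\big(x(b_k)\big)\neq0$. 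I claim $\delta$ equals, up to a nonzero scalar, $\prod_H\ell_H^{\,|W|(e_H-1)/2}$, the product over the reflecting hyperplanes $H$, with $\ell_H$ a co-root and $e_H:=|W_H|$ the order of the pointwise stabilizer $W_H$ of $H$ (which is cyclic, being a finite subgroup of $\F^\times$ via its action on $V/H$). Indeed $\delta^2=\det\big(\operatorname{Tr}_{R/R^W}(b_kb_l)\big)$, where $\operatorname{Tr}_{R/R^W}=\sum_{x\in W}x$, generates the discriminant ideal $\mathfrak d_{R/R^W}$; by the standard facts about the different of a pseudo-reflection group in the non-modular case, $\mathfrak d_{R/R^W}=N_{R/R^W}\big(\mathfrak D_{R/R^W}\big)$ with $\mathfrak D_{R/R^W}=(J)$, $J=\det(\partial f_i/\partial x_j)$ the Jacobian of basic invariants $f_1,\dots,f_n$ (so $R^W=\F[f_1,\dots,f_n]$), and $J$ equal up to a scalar to $\prod_H\ell_H^{\,e_H-1}$; hence $\delta^2$ equals $J^{|W|}$ up to a scalar, which gives the claim. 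In particular $v_\mathfrak p(\delta)=0$ unless $\mathfrak p=(\ell_H)$ for some $H$, and for every other $\mathfrak p$ we get $(\operatorname{im}\mu)_\mathfrak p=\operatorname{Maps}(W,R_\mathfrak p)=(\mathcal{H}_W)_\mathfrak p$ (the last module being sandwiched between them).

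\medskip

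It remains to treat $\mathfrak p=(\ell_{H_0})$. Write $\ell_0=\ell_{H_0}$, $e=e_{H_0}$, $W_{H_0}=\langle s_0\rangle$, and $\eta_0$ the eigenvalue of $s_0$ on $\F\ell_0$. After localizing at $\mathfrak p$, the only surviving conditions are those indexed by pseudo-reflections $y^{-1}s_0^ky$ with $y(H_{s_0^k})=H_0$, and unwinding the indices shows these couple values of $F$ only within single right cosets $W_{H_0}y$. Using the transform computation of the second paragraph together with a Vandermonde argument that removes the dependence on the choice of coset representative and shows the non-primitive powers $s_0^k$ impose nothing new, one finds $(\mathcal{H}_W)_\mathfrak p=\bigoplus_y\mathcal{H}_y$ over right cosets $W_{H_0}y$, with $\mathcal{H}_y\subseteq\operatorname{Maps}(W_{H_0}y,R_\mathfrak p)$ free of rank $e$ on the basis $h_b$, $h_b(s_0^m y)=\eta_0^{\,bm}\ell_0^{\,b}$ ($0\le b\le e-1$). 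The transition matrix $\big(\eta_0^{\,bm}\ell_0^{\,b}\big)_{m,b}$ has determinant $\ell_0^{\,\binom e2}$ times a unit (a Vandermonde determinant in $1,\eta_0,\dots,\eta_0^{\,e-1}$), so $\operatorname{Maps}(W_{H_0}y,R_\mathfrak p)/\mathcal{H}_y$ has length $\binom e2$; summing over the $|W|/e$ cosets, $\operatorname{Maps}(W,R_\mathfrak p)/(\mathcal{H}_W)_\mathfrak p$ has length $\tfrac{|W|}{e}\binom e2=\tfrac{|W|(e-1)}2=v_{\ell_0}(\delta)$, equal to the colength of $(\operatorname{im}\mu)_\mathfrak p$. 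Since $(\operatorname{im}\mu)_\mathfrak p\subseteq(\mathcal{H}_W)_\mathfrak p$, equality of colengths forces equality. This proves $\mathcal{H}_W\subseteq\operatorname{im}(\mu)$, and hence the theorem.

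\medskip

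I expect the main obstacle to be precisely this localized analysis at a reflecting hyperplane, where two somewhat delicate inputs must be dovetailed: (i) the exact $\ell_H$-adic valuation of $\delta$, equivalently the discriminant exponent of $R/R^W$ along $H$, which rests on the full tame ramification picture (inertia cyclic of order $e_H$) and the Jacobian factorization of $J$; and (ii) the verification that the infinitely many congruence conditions defining $\mathcal{H}_W$ that are ``supported at $H$'' — one for every pseudo-reflection in $W_H$, every coset, and every admissible exponent $i$ — collapse, after localization, to exactly the rank-$e_H$ sublattice of colength $|W|(e_H-1)/2$ on each $W_H$-coset. The roots-of-unity and Vandermonde bookkeeping set up while proving $\operatorname{im}(\mu)\subseteq\mathcal{H}_W$ is what makes (ii) go through, while (i) is where the classical structure theory of $R^W$ must be invoked.
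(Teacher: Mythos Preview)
Your argument is correct, but it follows a genuinely different route from the paper's, particularly for the hard inclusion $\mathcal{H}_W\subseteq\operatorname{im}(\mu)$.

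For injectivity, the paper works on the level of maximal spectra: it identifies $\mSpec(R\otimes_{R^W}R)$ with the union of graphs of $W$ in $\mSpec(R)\times\mSpec(R)$, and then proves separately (via a minimal-degree argument using $\Delta$-operators) that $R\otimes_{R^W}R$ is reduced. Your one-line argument---$R\otimes_{R^W}R$ is $R^W$-free hence torsion-free, so it embeds in $\operatorname{Frac}(R)\otimes_{\operatorname{Frac}(R^W)}\operatorname{Frac}(R)\cong\Maps(W,\operatorname{Frac}(R))$ by Galois theory---is considerably shorter and bypasses reducedness entirely.

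For the inclusion $\mathcal{H}_W\subseteq\operatorname{im}(\mu)$, the approaches diverge completely. The paper never localizes: it first proves the nontrivial closure property that ${_iA_s}$ maps $\mathcal{H}_W$ to $\mathcal{H}_W$ (Proposition~\ref{prop:AisClosed}, a two-case computation), and then runs a graded minimal-counterexample argument: a putative $F\in\mathcal{H}_W\setminus\operatorname{im}(\mu)$ of least degree has ${_iA_s}(F)\in\operatorname{im}(\mu)$ for $i\geq1$, whence by the decomposition $F=\tfrac{1}{|s|}\sum_i{_iA_s}(F)L_s^i$ the class $\bar F$ is $s$-invariant for every $s$, hence $W$-invariant, hence constant, a contradiction. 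Your proof instead matches colengths over each height-one prime of $R$: at $(\ell_H)$ you compute the colength of $(\operatorname{im}\mu)_{\mathfrak p}$ as $v_{\ell_H}(\delta)=|W|(e_H-1)/2$ via the discriminant--Jacobian formula, and the colength of $(\mathcal{H}_W)_{\mathfrak p}$ as $\frac{|W|}{e_H}\binom{e_H}{2}$ by a direct DFT/Vandermonde analysis on each $W_H$-coset; equality then forces $(\operatorname{im}\mu)_\mathfrak p=(\mathcal{H}_W)_\mathfrak p$, and reflexivity of the free module $\operatorname{im}(\mu)$ lets you globalize.

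Each approach has its price. The paper's is self-contained and develops the ${_iA_s}$-calculus that feeds directly into the hypergraph picture of Section~4, but needs the somewhat delicate closure Proposition~\ref{prop:AisClosed}. Yours is more structural and pinpoints exactly where the image sits inside $\Maps(W,R)$, but imports the invariant-theoretic input that the different of $R/R^W$ is generated by the Jacobian $J=\prod_H\ell_H^{e_H-1}$ (Steinberg's formula in the non-modular case), together with $\operatorname{disc}=N(\mathfrak D)$; you should cite this explicitly. Your Vandermonde reduction---that the conditions coming from non-generating reflections $s_0^k$ and from other coset representatives impose nothing new at $(\ell_0)$---is correct, and the sketch you give is enough to reconstruct it.
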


The proof of Theorem \ref{thm:Main} comes in two parts:  Injectivity of the localization map and identification of the image.  We derive injectivity of the localization map by identifying the equivariant coinvariant ring $R\otimes_{R^W}R$ with the ring of regular functions on the union of graphs, i.e.
$$\mSpec(R\otimes_{R^W}R)\cong \bigcup_{x\in W}\left\{(M,x^{-1}(M))\left|M\in \mSpec(R)\right.\right\}.$$ 
While this fact has been pointed out by others before, e.g. \cite{S1,W0}, we give a careful proof of it here, and this is the bulk of the work we do in Part I. 

In Part II we identify the image of the localization map as $\mathcal{H}_W$.  To this end, we define operators ${_iA_s}\colon \Maps(W,R)\rightarrow\Maps(W,\operatorname{Quot}(R))$ by
\begin{equation}
\label{eq:iAs}
{_iA_s}(F)(x)=\sum_{j=0}^{|s|-1}\frac{F(x\cdot s^j)}{L_s^i(x\cdot s^j)}
\end{equation}
so that $\mathcal{H}_W$ consists of the maps $F\in \Maps(W,R)$ for which ${_iA_s}(F)\in\Maps(W,R)$ too.  These  
operators are equivariant versions of the so-called \emph{generalized $\Delta$ operators} ${_i\Delta_s}\colon R_W\rightarrow R_W(-i)$ introduced by McDaniel and Smith in their (forthcoming) paper \cite{SM}.  From this observation we deduce containment in one direction, namely 
$$\mu(R\otimes_{R^W}R)\subseteq\mathcal{H}_W.$$  
We show that the set $\mathcal{H}_W$ is closed under the operators ${_iA_s}$.  We then use these ``restricted'' operators ${_iA_s}\colon\mathcal{H}_W\rightarrow\mathcal{H}_W(-i)$ to deduce containment in the other direction.  

If $W$ is a Weyl group associated to a compact semi-simple Lie group $G$ and a Cartan subgroup $T\subset G$, then GKM theory computes the $T$-equivariant cohomology of the homogeneous space $X=G/T$ from the moment graph of the (left) $T$-action on $X$ which, in this case, agrees with our set $\mathcal{H}_W$.  On the other hand, the Borel description of the equivariant cohomology of $X$ identifies it as the equivariant coinvariant ring of $W$, $R\otimes_{R^W}R$, and reconciling these two distinct points of view recovers the isomorphism from Theorem \ref{thm:Main}:
$$\mathcal{H}_W\cong R\otimes_{R^W}R.$$
See the paper by Guillemin, Holm, and Zara \cite{GHZ} for further details and references.

In a series of papers \cite{GZ0,GZ1,GZ2} Guillemin and Zara introduced a combinatorial analogue of the moment graph of a GKM $T$-manifold, consisting of a regular graph $\Gamma=(\mathcal{V},\mathcal{E})$ and a linear function $\alpha\colon\mathcal{E}\rightarrow \P(V^*)$ assigning a linear subspace in $V^*$ to every edge of $\Gamma$ satisfying certain compatibility conditions.  They called the pair $(\Gamma,\alpha)$ a GKM 1-skeleton, and defined its \emph{equivariant cohomology} to be the subset of polynomial maps on the vertex set $\mathcal{V}$ such that on adjacent vertices, the polynomials agree along the annihilator of the linear subspace assigned to that edge, i.e.
\begin{equation}
\label{eq:EqCo}
H(\Gamma,\alpha)=\left\{F\colon\mathcal{V}\rightarrow R\left|F(q)-F(p)=c_{pq}\cdot\alpha(pq), \ \forall \ pq\in\mathcal{E}, \ \text{some $c_{pq}\in R$}\right.\right\}.
\end{equation}   

If $W$ is an arbitrary Coxeter group, there is an associated graph $\Gamma_W=(\mathcal{V}_W,\mathcal{E}_W)$ with vertex set $\mathcal{V}_W$ given by the elements of the group $W$, and edge set $\mathcal{E}_W$ given by (right) reflection orbits $\mathfrak{o}_s(x)\coloneqq \left\{x,x\cdot s\right\}$.  Further, an axial function on $\Gamma_W$ is given by the function $\alpha(\mathfrak{o}_s(x))=\F\cdot x(\ell_s)$.  In this case it is straight forward to see that equivariant cohomology of the resulting GKM 1-skeleton, $(\Gamma_W,\alpha_W)$ is equal to our set $\mathcal{H}_W$.  

If $W$ is an arbitrary pseudo-reflection group, then the reflection orbits $\mathfrak{o}_s(x)$ may have more than two vertices, making $\Gamma_W$ into a \emph{hypergraph}.  Moreover the equivariant cohomology of the object $(\Gamma_W,\alpha_W)$ as defined in Equation \eqref{eq:EqCo} is no longer isomorphic to the equivariant coinvariant ring $R\otimes_{R^W}R$.  For example consider the cyclic group $W=\langle s\rangle$ generated by a single pseudo-reflection of order $d>2$.  In this case the equivariant coinvariant ring is generated by powers of a single linear element $\left\{1\otimes\ell_s^i\left|0\leq i\leq |s|-1\right.\right\}$ while generators of the module $H(\Gamma_W,\alpha_W)$ given in Equation \eqref{eq:EqCo} all have degree one.  

Following Guillemin and Zara \cite{GZ2}, we give an alternative definition of the equivariant cohomology of a hyperedge $e\in\mathcal{E}$ by first specifying a \emph{generating class} $\tau_e\colon\mathcal{V}_e\rightarrow \alpha(e)$ then taking the free $R$-submodule of $\Maps(\mathcal{V}_e,R)$ generated by powers of the generating class $\left\{1,\tau,\ldots,\tau^{|e|-1}\right\}$, i.e.
$$H(e,\tau_e)=\left\{G\colon\mathcal{V}_e\rightarrow R\left|G=\sum_{j=0}^{|e|-1}g_i\tau_e^i, \ \text{some constants $g_i\in R$}\right.\right\}.$$
If $\tau=\left\{\tau_e\right\}_{e\in\mathcal{E}}$ is a compatible system of generating classes for $(\Gamma,\alpha)$ we say that the triple $(\Gamma,\alpha,\tau)$ is a \emph{linear hypergraph} and we define its equivariant cohomology to be those polynomial maps on $\mathcal{V}$ whose restriction to each hyperedge gives an element in the equivariant cohomology of that hyperedge, i.e.
\begin{equation}
\label{eq:HyperEqCo}
H(\Gamma,\alpha,\tau)=\left\{F\colon\mathcal{V}\rightarrow R\left|\rho_e(F)\in H(e,\tau_e), \ \forall \ e\in\mathcal{E}\right.\right\}.
\end{equation}
We show for any finite pseudo-reflection group $W$, there is a natural choice for a compatible system of generating classes $\tau_W$ for $(\Gamma_W,\alpha_W)$ such that the equivariant cohomology $H(\Gamma_W,\alpha_W,\tau_W)$ as defined in Equation \eqref{eq:HyperEqCo} actually does coincide with our set $\mathcal{H}_W$ from Theorem \ref{thm:Main}.  The resulting isomorphism from Theorem \ref{thm:Main} and the above identification, i.e. 
\begin{equation}
\label{eq:GKMCoinvariant}
H(\Gamma_W,\alpha_W,\tau_W)\cong R\otimes_{R^W}R
\end{equation}
then gives us our so-called GKM description of the equivariant coinvariant ring of the pseudo-reflection group $W$.

This paper is organized as follows.  In Section 2, we prove injectivity of the localization map.  In Section 3, we identify the image of the localization map with our subset $\mathcal{H}_W$.  In Section 4, we define linear hypergraphs and their equivariant cohomology, we describe the linear hypergraph associated to a psuedo-reflection group $W$, and we identify its equivariant cohomology with our set $\mathcal{H}_W$.

\section{Part One:  Injectivity of the Localization Map}
Let $\F$, $V$, $R$, $W$ be as above.  If $S\subset R$ is any $\F$ subalgebra, we can form the tensor product $R\otimes_SR$.  The tensor product comes with ``factor maps'' $\pi_{S,1},\pi_{S,2}\colon R\rightarrow R\otimes_SR$ defined by $\pi_{S,1}(r)=r\otimes 1$ and $\pi_{S,2}(r)=1\otimes r$.  The tensor product and its factor maps satisfy the following universal property:
\begin{uptp}
If $A$ is any $\F$ algebra, and if there exist maps of $\F$ algebras $\phi_1,\phi_2\colon R\rightarrow A$ such that $\phi_1|_S=\phi_2|_S$, then there exists a unique map of $\F$ algebras $\Phi\colon R\otimes_SR\rightarrow A$ making the following diagram commute:
$$\xymatrix{& R\otimes_SR\ar@{-->}[rr]^-{\Phi} & & A & \\
R\ar[ru]^-{\pi_{S,1}}\ar[rrru]^-{\phi_1} & & & & R\ar[lllu]_-{\pi_{S,2}}\ar[lu]_-{\phi_2}\\
& & S\ar[llu]\ar[rru] & &\\}$$ 
\end{uptp}
In particular, note that if $S'\subseteq S$ is any $\F$ subalgebra, there is a unique surjective map of $\F$ algebras $q_{S',S}\colon R\otimes_{S'}R\rightarrow R\otimes_{S}R$ defined by $q_{S',S}(f\otimes g)=f\otimes g$.  In case $S'=\F$, we will denote this map $q_S\colon R\otimes R\rightarrow R\otimes_S R$, where by $R\otimes R$ we mean $R\otimes_{\F}R$.  In the special case where $S=R^W$, we write $q_W\colon R\otimes R\rightarrow R\otimes_{R^W}R$ and we set $I_W\coloneqq \ker(q_W)\subset R\otimes R$.

Note that in the universal property, we may also take $A=R$ and $\phi_{i}=\operatorname{id}_R$ to get the multiplication map $\hat{\mu}\colon R\otimes R\rightarrow R$, $\hat{\mu}(f\otimes g)=f\cdot g$.  More generally, if $x\colon R\rightarrow R$ is an $\F$-algebra automorphism, we can define the $x$-twisted multiplication map $\hat{\mu}_x\colon R\otimes R\rightarrow R$, $\hat{\mu}_x(f\otimes g)=f\cdot x(g)$.  Let $I_x\subset R\otimes R$ be the kernel of $\hat{\mu}_x$.  If that automorphism $x\in\operatorname{Aut}(R)$ fixes the subring $S$, i.e. $x|_S=\operatorname{id}_S$, then we get $x$-twisted $S$-multiplication maps $\mu_{S,x}\colon R\otimes_SR\rightarrow R$, $\mu_{S,x}(f\otimes g)=f\cdot x(g)$.  In particular we have $\mu_{S,x}\circ q_S=\hat{\mu}_x$.  In the special case where $S=R^W$, we simply write the $x$-twisted $S$-multiplication map as $\mu_x\colon R\otimes_{R^W}R\rightarrow R$.  

Define the localization map $\mu\colon R\otimes_{R^W}R\rightarrow\bigoplus_{x\in W}R$ and the lifted localization map $\hat{\mu}\colon R\otimes R\rightarrow \bigoplus_{x\in W}R$ by $\mu(F)=\left(\mu_x(F)\right)_{x\in W}$ and $\hat{\mu}(F)=\left(\hat{\mu}_x(F)\right)_{x\in W}$, respectively.  Then clearly we have 
$$\mu\circ q_W=\hat{\mu},$$
and hence in order to see that the localization map is injective, we need to show that $\ker(q_W)=\ker(\hat{\mu})$.  Note that $\ker(\hat{\mu})=\bigcap_{x\in W}\ker(\hat{\mu}_x)=\bigcap_{x\in W}I_x$ and $\ker(q_W)=I_W$.  Thus we need to show that 
\begin{equation}
\label{eq:WTS}
I_W=\bigcap_{x\in W}I_x.
\end{equation}

We can prove Equation \eqref{eq:WTS} using some basic facts from commutative algebra.  For a commutative ring $A$ define $\mSpec(A)$ to be the set of maximal ideals or the \emph{maximal spectrum of $A$}.  For $I\subseteq A$ an ideal define the subset $\mathcal{V}_m(I)\subset\mSpec(A)$ to be the set of maximal ideals in $A$ containing $I$.  Note there is a bijection between the sets $\mSpec(A/I)$ and $\mathcal{V}_m(I)$.  We give some basic properties of the operator $\mathcal{V}_m$:
\begin{lemma}
\label{lem:VmProp}
Let $A$ be a commutative ring, and let $I,J\subseteq A$ be ideals
\begin{enumerate}
\item If $I\subseteq J$ then $\mathcal{V}_m(I)\supseteq\mathcal{V}_m(J)$.
\item We have $\mathcal{V}_m(I\cap J)=\mathcal{V}_m(I)\cup\mathcal{V}_m(J)$.
\item If $A$ is a finitely generated $\F$ algebra, then $\mathcal{V}_m(I)=\mathcal{V}_m(J)$ if and only if $\sqrt{I}=\sqrt{J}$.
\end{enumerate}
\end{lemma}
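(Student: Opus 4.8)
The plan is to treat the three parts in order, using only the defining property of $\mathcal{V}_m$ (a maximal ideal $\mathfrak{m}$ lies in $\mathcal{V}_m(I)$ iff $I\subseteq\mathfrak{m}$) together with the fact that maximal ideals are prime; part (iii) will additionally invoke the Nullstellensatz. Part (i) is immediate: if $\mathfrak{m}\in\mathcal{V}_m(J)$ then $J\subseteq\mathfrak{m}$, and since $I\subseteq J$ this forces $I\subseteq\mathfrak{m}$, i.e. $\mathfrak{m}\in\mathcal{V}_m(I)$; no computation is needed. For part (ii), the inclusion $\mathcal{V}_m(I)\cup\mathcal{V}_m(J)\subseteq\mathcal{V}_m(I\cap J)$ follows from part (i) applied to $I\cap J\subseteq I$ and $I\cap J\subseteq J$. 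For the reverse inclusion I would argue by contradiction: if $\mathfrak{m}\in\mathcal{V}_m(I\cap J)$ but $\mathfrak{m}\notin\mathcal{V}_m(I)$ and $\mathfrak{m}\notin\mathcal{V}_m(J)$, choose $a\in I\setminus\mathfrak{m}$ and $b\in J\setminus\mathfrak{m}$; then $ab\in I\cap J\subseteq\mathfrak{m}$, contradicting the primality of the maximal ideal $\mathfrak{m}$.

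For part (iii), the essential input is that a finitely generated $\F$-algebra is a Jacobson ring, so that for every ideal $K\subseteq A$ one has $\sqrt{K}=\bigcap_{\mathfrak{m}\in\mathcal{V}_m(K)}\mathfrak{m}$; over the algebraically closed field $\F$ this is exactly the classical Nullstellensatz. Granting this, I would first record the auxiliary fact $\mathcal{V}_m(K)=\mathcal{V}_m(\sqrt{K})$ for any ideal $K$: the inclusion $\supseteq$ is part (i), and $\subseteq$ holds because a maximal ideal containing $K$ is radical, hence contains $\sqrt{K}$. Then, if $\sqrt{I}=\sqrt{J}$ we get $\mathcal{V}_m(I)=\mathcal{V}_m(\sqrt{I})=\mathcal{V}_m(\sqrt{J})=\mathcal{V}_m(J)$; and conversely, if $\mathcal{V}_m(I)=\mathcal{V}_m(J)$ the Jacobson identity gives $\sqrt{I}=\bigcap_{\mathfrak{m}\in\mathcal{V}_m(I)}\mathfrak{m}=\bigcap_{\mathfrak{m}\in\mathcal{V}_m(J)}\mathfrak{m}=\sqrt{J}$.

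The only step carrying genuine content is part (iii), where the entire weight rests on the Nullstellensatz / Jacobson property; the remainder is purely formal manipulation of the definition. Accordingly I would not reprove the Jacobson property of finitely generated algebras over a field but simply cite a standard commutative algebra reference for it, and present the lemma's proof in a few lines.
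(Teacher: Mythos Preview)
Your proposal is correct and follows essentially the same route as the paper: parts (i) and (ii) are proved by the same primality argument (the paper phrases (ii) directly rather than by contradiction, but the product $ab\in I\cap J$ is the identical key step), and part (iii) in both proofs rests on the Jacobson/Nullstellensatz identity $\sqrt{K}=\bigcap_{\mathfrak{m}\in\mathcal{V}_m(K)}\mathfrak{m}$, which the paper likewise cites from a standard reference. Your auxiliary observation $\mathcal{V}_m(K)=\mathcal{V}_m(\sqrt{K})$ is exactly what the paper establishes (in slightly different words) for the converse direction of (iii).
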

\begin{proof}
If $I\subseteq J$, and $M\in\mSpec(A)$ is a maximal ideal containing $J$, then certainly $M$ must also contain $I$.  Hence $\mathcal{V}_m(J)\subseteq\mathcal{V}_m(I)$.  If $M\in\mSpec(A)$ is a maximal ideal containing either $I$ or $J$, then certainly $M$ must also contain $I\cap J$.  Conversely if $M$ contains $I\cap J$ and $M\not\supseteq J$ take $b\in J\setminus M$.  Then for each $a\in I$, we have $a\cdot b\in I\cap J\subseteq M$ and since $M$ is prime and $b\notin M$ we must therefore have $a\in M$.  Since this holds for each $a\in I$, we conclude that $I\subset M$.  This shows that $\mathcal{V}_m(I\cap J)=\mathcal{V}_m(I)\cup\mathcal{V}_m(J)$.  Finally, if $A$ is a finitely generated $\F$-algebra recall that the radical of an ideal is the intersection of \emph{maximal} ideals containing it \cite[Theorem 5.5]{Mat}.  Now suppose that $\mathcal{V}_m(I)=\mathcal{V}_m(J)$.  Then certainly we have the equality
$$\sqrt{I}=\bigcap_{M\in\mathcal{V}_m(I)}M=\bigcap_{M\in\mathcal{V}_m(J)}M=\sqrt{J}.$$
Conversely suppose that $\sqrt{I}=\sqrt{J}$ and let $M\in\mathcal{V}_m(I)$.  If $b\in J\subseteq\sqrt{J}=\sqrt{I}$ then $b^N\in I\subseteq M$ for some $N>0$.  But $M$ is prime thus $b\in M$ and this holds for all $b\in J$ hence $J\subseteq M$.  This shows $\mathcal{V}_m(I)\subseteq\mathcal{V}_m(J)$ and we may repeat the argument replacing $I$ with $J$ to find that $\mathcal{V}_m(J)\subseteq\mathcal{V}_m(J)$as well.
\end{proof} 

Recall that if $\phi\colon A\rightarrow B$ is a map of finitely generated $\F$ algebras and $\F$ is algebraically closed, then $\phi$ induces a map $\phi^*\colon\mSpec(B)\rightarrow\mSpec(A)$, $\phi^*(M)=\phi^{-1}(M)$ (this is a direct consequence of Zariski's Lemma \cite[Theorem 5.3]{Mat}).  

Let $R$ be the polynomial ring as above.
Let $\pi_i\colon R\rightarrow R\otimes R$ be the $i^{th}$ factor map as above, and consider the map
\begin{equation}
\label{eq:Prod}
(\pi_1^*,\pi_2^*)\colon \mSpec(R\otimes R)\rightarrow\mSpec(R)\times\mSpec(R)
\end{equation}
We claim that the map in Equation \eqref{eq:Prod} is a bijection.  Despite its intuitive feel, this fact is not quite obvious.  To prove it, we need the following lemma.
\begin{lemma}
\label{lem:FactorSpec}
If $M_1,M_2\in\mSpec(R)$ then the ideal $M=M_1\otimes R+R\otimes M_2\subseteq R\otimes R$ is the kernel of the map
$$\phi\colon R\otimes R\rightarrow R/M_1\otimes R/M_2\cong \F.$$
In particular, $M$ is a maximal ideal.  Moreover we have $\pi_1^{-1}(M)=M_1$ and $\pi_2^{-1}(M)=M_2$.
\end{lemma}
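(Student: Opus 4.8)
The plan is to deduce everything from the standard fact that a tensor product of surjections is surjective with the ``expected'' kernel. Concretely, for commutative $\F$-algebras $A,B$ with ideals $I\subseteq A$ and $J\subseteq B$, write $q_I\colon A\to A/I$ and $q_J\colon B\to B/J$ for the quotient maps; I would first show that $q_I\otimes q_J\colon A\otimes_\F B\to (A/I)\otimes_\F(B/J)$ is surjective with kernel exactly $I\otimes B+A\otimes J$. Here, as in the statement of the lemma, $I\otimes B$ means the image of the natural map $I\otimes_\F B\to A\otimes_\F B$ (an ideal of $A\otimes B$), and similarly for $A\otimes J$. Surjectivity and the inclusion $I\otimes B+A\otimes J\subseteq\ker(q_I\otimes q_J)$ are immediate. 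For the reverse inclusion I would invoke right-exactness of the functors $-\otimes_\F B$ and $(A/I)\otimes_\F-$: applying the first to $I\to A\to A/I\to 0$ identifies $(A\otimes B)/(I\otimes B)$ with $(A/I)\otimes B$, and applying the second to $J\to B\to B/J\to 0$ identifies the further quotient by the image of $A\otimes J$ with $(A/I)\otimes(B/J)$. Alternatively, and more in keeping with the universal-property style used elsewhere in the paper, one checks that $(\bar a,\bar b)\mapsto\overline{a\otimes b}$ is a well-defined $\F$-bilinear map $A/I\times B/J\to(A\otimes B)/(I\otimes B+A\otimes J)$, hence descends to an $\F$-algebra map on the tensor product that inverts the obvious surjection in the other direction.

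Specializing to $A=B=R$, $I=M_1$, $J=M_2$ makes $q_{M_1}\otimes q_{M_2}$ the map $\phi$ of the lemma, so the previous step gives $\ker\phi=M_1\otimes R+R\otimes M_2=M$, which is the first assertion. Since $\F$ is algebraically closed and $R$ is a finitely generated $\F$-algebra, Zariski's Lemma forces $R/M_1\cong R/M_2\cong\F$, so $(R\otimes R)/M\cong(R/M_1)\otimes_\F(R/M_2)\cong\F$ is a field and $M$ is a maximal ideal.

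For the final two identities, I would use $M=\ker\phi$ to write $\pi_i^{-1}(M)=\ker(\phi\circ\pi_i)$. Chasing the definitions, $\phi\circ\pi_1$ sends $r$ to $(r\bmod M_1)\otimes(1\bmod M_2)$, i.e. it factors as the quotient $R\to R/M_1$ followed by the map $R/M_1\to R/M_1\otimes_\F R/M_2$, $x\mapsto x\otimes(1\bmod M_2)$; the latter is injective because $R/M_2\cong\F$ makes $-\otimes_\F R/M_2$ the identity functor up to canonical isomorphism. Hence $\ker(\phi\circ\pi_1)=\ker(R\to R/M_1)=M_1$, and the computation for $\pi_2$ is symmetric.

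The one genuinely non-formal point is the reverse inclusion $\ker\phi\subseteq I\otimes B+A\otimes J$ in the first step: since $I\otimes B$ and $A\otimes J$ are images of tensor products over $\F$ (which need not inject), one cannot verify this by a naive element chase, and this is exactly where right-exactness of tensor, or the universal-property bookkeeping, does the real work. Everything downstream of that identification — maximality via Zariski's Lemma and the two preimage computations — is routine.
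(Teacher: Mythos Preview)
Your proof is correct, but the route differs from the paper's. The paper argues the hard inclusion $\ker\phi\subseteq M$ by an explicit minimality argument: pick $x\in\ker\phi\setminus M$ expressible as a sum of simple tensors $\sum_{i=1}^K f_i\otimes g_i$ with $K$ minimal, factor $\phi$ through $R\otimes R/M_2$, use $R/M_2\cong\F$ to rewrite $\sum f_i\otimes\bar g_i$ as $\left(\sum f_i\bar g_i\right)\otimes 1$, and then either all $\bar g_i=0$ (forcing $x\in R\otimes M_2$) or one can eliminate a term modulo $M$ to contradict minimality of $K$. This is elementary and self-contained but leans on the quotients being $\F$ to move $\bar g_i$ across the tensor. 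Your argument via right-exactness of $-\otimes_\F B$ and $(A/I)\otimes_\F-$ is cleaner and proves the general identity $\ker(q_I\otimes q_J)=I\otimes B+A\otimes J$ for arbitrary ideals, at the cost of invoking a standard categorical fact rather than computing by hand. You also spell out the ``moreover'' preimage claims, which the paper leaves implicit.
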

\begin{proof}
Certainly $M\subset\ker(\phi)$.  By way of contradiction, assume that $\ker(\phi)\not\subseteq M$ and suppose that $x\in\ker(\phi)\setminus M$.  We may write $x$ as a sum of simple tensors $x=\sum_{i=1}^Kf_i\otimes g_i$ for $f_i,g_i\in R$ and some positive integer $K$.  To get the contradiction we assume that we have chosen our $x$ with $K$ as small as possible.  Note that $\phi\colon R\otimes R\rightarrow R/M_1\otimes R/M_2$ factors through the maps $\phi_1\colon R\otimes R\rightarrow R\otimes R/M_2$ and $\phi_2\colon R\otimes R/M_2\rightarrow R/M_1\otimes R/M_2$.  We have 
$$\phi_1(x)=\sum_{i=1}^Kf_i\otimes\bar{g}_i=\sum_{i=1}^Kf_i\cdot\bar{g}_i\otimes 1.$$
Now since $\phi_2(\phi_1(x))=\phi(x)=0$ the sum $\sum_{i=1}^Kf_i\cdot\bar{g}_i$ must lie in the maximal ideal $M_1$.  If $\bar{g}_i=0$ for all $i$ then $g_i\in M_2$ for all $i$, and $x\in R\otimes M_2\subset M$, contrary to our choice of $x$.  On the other hand, if $\bar{g}_j\neq 0$ for some index $1\leq j\leq K$ then we may eliminate the index $j$ from our sum and replace it by an element of $M$, i.e.  
$$x=\underbrace{\sum_{\substack{i=1\\ i\neq j\\}}^{K} f_i\otimes \left(g_i-\frac{\bar{g}_i}{\bar{g}_j}\cdot g_j\right)}_{x'} +m$$
for some $m\in M$.  But now we have found another element $x'\in\ker(\phi)\setminus M$ that can be represented as a sum of fewer simple tensors than $x$, again contrary to our choice of $x$.
Thus we must have $\ker(\phi)\subset M$.
\end{proof}

\begin{lemma}
\label{lem:Product}
The map in Equation \eqref{eq:Prod} is bijective.
\end{lemma}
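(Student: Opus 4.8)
The plan is to extract both surjectivity and injectivity of $(\pi_1^*,\pi_2^*)$ directly from Lemma \ref{lem:FactorSpec}, which already packages the only nontrivial computation (the minimal-length argument on simple tensors). First, one should note that $R\otimes R\cong\F[x_1,\dots,x_n,y_1,\dots,y_n]$ is a finitely generated $\F$-algebra, so by the consequence of Zariski's Lemma recalled just before the lemma, $\pi_i^{-1}$ carries maximal ideals of $R\otimes R$ to maximal ideals of $R$; this is what makes $(\pi_1^*,\pi_2^*)$ a genuine map into $\mSpec(R)\times\mSpec(R)$ in the first place.

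For surjectivity, given a pair $(M_1,M_2)\in\mSpec(R)\times\mSpec(R)$, I would simply take $M=M_1\otimes R+R\otimes M_2$. Lemma \ref{lem:FactorSpec} asserts precisely that this is a maximal ideal of $R\otimes R$ with $\pi_1^{-1}(M)=M_1$ and $\pi_2^{-1}(M)=M_2$, so $(\pi_1^*,\pi_2^*)(M)=(M_1,M_2)$, as desired.

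For injectivity, suppose $M,M'\in\mSpec(R\otimes R)$ both map to the same pair $(M_1,M_2)$. The key observation is that $\pi_1^{-1}(M)=M_1$ forces $M_1\otimes R\subseteq M$: for each $m\in M_1$ we have $\pi_1(m)=m\otimes 1\in M$, and since $M$ is an ideal, $(m\otimes 1)\cdot a\in M$ for every $a\in R\otimes R$, and such elements generate the ideal $M_1\otimes R$. Symmetrically $R\otimes M_2\subseteq M$, hence $M_1\otimes R+R\otimes M_2\subseteq M$. Since the left-hand side is maximal (again Lemma \ref{lem:FactorSpec}) and $M$ is a proper ideal, the containment is an equality; running the identical argument for $M'$ gives $M'=M_1\otimes R+R\otimes M_2=M$.

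There is essentially no obstacle left once Lemma \ref{lem:FactorSpec} is in hand — that lemma does all the real work. The only steps demanding a moment's care are the pull-back/push-forward observation $M_1\otimes R\subseteq M$ and the bookkeeping that a proper ideal containing a maximal ideal coincides with it; everything else is formal.
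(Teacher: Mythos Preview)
Your proof is correct and follows essentially the same approach as the paper: surjectivity via Lemma \ref{lem:FactorSpec}, and injectivity by observing that any maximal ideal mapping to $(M_1,M_2)$ must contain the maximal ideal $M_1\otimes R+R\otimes M_2$ and hence equal it. You spell out a few details the paper leaves implicit (well-definedness of the map, the reason for the containment $M_1\otimes R+R\otimes M_2\subseteq M$), but the argument is the same.
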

\begin{proof}
Lemma \ref{lem:FactorSpec} implies that the map is surjective.  To see that it is injective, fix maximal ideals $M_1,M_2\in\mSpec(R)$ and $M=M_1\otimes R+R\otimes M_2\in\mSpec(R\otimes R)$ and suppose that $\hat{M}\in\mSpec(R\otimes R)$ is another maximal ideal such that $\pi_1^{-1}(\hat{M})=M_1$ and $\pi_2^{-1}(\hat{M})=M_2$.  Then certainly we have the containment $M\subseteq \hat{M}$.  But since $M$ and $\hat{M}$ are both maximal ideals, the containment must be an equality.
\end{proof}

Let $S\subset R$ be any $\F$ subalgebra as above with $q_S\colon R\otimes R\rightarrow R\otimes_SR$.  We want to understand the subset $q_S^*\left(\mSpec(R\otimes_SR)\right)\subset\mSpec(R\otimes R)$, or, equivalently, the subset $(\pi_1^*,\pi_2^*)\circ q_S\left(\mSpec(R\otimes_SR)\right)=\left(\pi_{S,1}^*,\pi_{S,2}^*\right)\left(\mSpec(R\otimes_SR)\right)$.  
\begin{lemma}
\label{lem:mSpec}
The image $(\pi_{S,1}^*,\pi_{S,2}^*)\left(\mSpec(R\otimes_SR)\right)$ is equal to the set
\begin{equation}
\label{eq:RSR}
\mSpec(R)\times_S\mSpec(R)\coloneqq \left\{(M_1,M_2)\in\mSpec(R)\times\mSpec(R)\left|M_1\cap S=M_2\cap S\right.\right\}.
\end{equation}
\end{lemma}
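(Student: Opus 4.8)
The plan is to use the quotient presentation $R\otimes_S R\cong (R\otimes R)/I_S$, where $I_S=\ker q_S$. Since $q_S$ is surjective, $q_S^*$ maps $\mSpec(R\otimes_S R)$ bijectively onto the set of maximal ideals of $R\otimes R$ containing $I_S$, and since $\pi_{S,i}^*=\pi_i^*\circ q_S^*$, the set we must compute is the image under the bijection $(\pi_1^*,\pi_2^*)$ of Lemma~\ref{lem:Product} of this set of maximal ideals. By Lemmas~\ref{lem:FactorSpec} and~\ref{lem:Product} that bijection sends $M\in\mSpec(R\otimes R)$ to the pair $(M_1,M_2)=(\pi_1^{-1}(M),\pi_2^{-1}(M))$ and has inverse $(M_1,M_2)\mapsto M_1\otimes R+R\otimes M_2$. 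So the whole problem reduces to deciding, for a pair of maximal ideals $M_1,M_2$ of $R$, whether the maximal ideal $M=M_1\otimes R+R\otimes M_2$ contains $I_S$.

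To do this I would first record the standard fact that $I_S$ is the ideal of $R\otimes R$ generated by the elements $s\otimes 1-1\otimes s$ with $s\in S$: indeed $rs\otimes r'-r\otimes sr'=(s\otimes 1-1\otimes s)(r\otimes r')$, and the elements on the left span the kernel of the surjection $R\otimes_\F R\twoheadrightarrow R\otimes_S R$. Hence $M\supseteq I_S$ if and only if $s\otimes 1-1\otimes s\in M$ for every $s\in S$. Now invoke the explicit description of $M$ from Lemma~\ref{lem:FactorSpec}: $M=\ker\phi$ with $\phi\colon R\otimes R\to R/M_1\otimes R/M_2$, and under the identifications $R/M_i\cong\F$ afforded by Zariski's Lemma this is $\phi(f\otimes g)=e_1(f)\,e_2(g)$, where $e_i\colon R\to\F$ is the quotient map at $M_i$. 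Therefore $\phi(s\otimes 1-1\otimes s)=e_1(s)-e_2(s)$, and we conclude that $M\supseteq I_S$ if and only if $e_1|_S=e_2|_S$.

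It remains to identify the condition $e_1|_S=e_2|_S$ with $M_1\cap S=M_2\cap S$. One implication is immediate, since $\ker(e_i|_S)=M_i\cap S$. For the converse: because $S$ is an $\F$-subalgebra of $R$ it contains $\F$, so the image of $e_i|_S$ is an $\F$-subalgebra of $R/M_i\cong\F$ and hence equals $\F$; thus $e_i|_S$ factors as a surjection $S\twoheadrightarrow S/(M_i\cap S)$ followed by an $\F$-algebra isomorphism $S/(M_i\cap S)\to\F$. Since $\F$ has no nontrivial $\F$-algebra automorphism, this isomorphism is uniquely determined by its source, so $M_1\cap S=M_2\cap S$ forces $e_1|_S=e_2|_S$. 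Chaining the equivalences gives $(\pi_{S,1}^*,\pi_{S,2}^*)(\mSpec(R\otimes_S R))=\{(M_1,M_2):M_1\cap S=M_2\cap S\}$, which is exactly $\mSpec(R)\times_S\mSpec(R)$.

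The only steps that call for real care are: pinning down the generators of $I_S$, a standard but not wholly formal fact about base change of tensor products, which should be stated and checked explicitly; and the reverse implication of the final equivalence, where one genuinely needs the $\F$-algebra hypothesis to conclude that the residue field $S/(M_i\cap S)$ is $\F$ and hence that the evaluation on $S$ is recovered from its kernel — without it the image could fail to fill out all of $\mSpec(R)\times_S\mSpec(R)$. Everything else is a formal chase through Lemmas~\ref{lem:FactorSpec} and~\ref{lem:Product} together with the Nullstellensatz.
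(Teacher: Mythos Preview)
Your argument is correct. It differs from the paper's proof mainly in packaging: you pull the problem back to $R\otimes R$, identify $I_S$ as the ideal generated by $\{s\otimes 1-1\otimes s:s\in S\}$, and then test containment $I_S\subseteq M_1\otimes R+R\otimes M_2$ via the evaluation description $M=\ker\phi$ from Lemma~\ref{lem:FactorSpec}. The paper instead stays in $R\otimes_S R$ and, for the harder inclusion, builds the maximal ideal directly from the universal property by checking that the two maps $\phi_1,\phi_2\colon R\to R/M_1\otimes R/M_2\cong\F$ agree on $S$. Both routes hinge on the same fact, namely that $M_1\cap S=M_2\cap S$ forces $e_1|_S=e_2|_S$; your justification---$\F\subseteq S$, so $e_i|_S$ is a surjective $\F$-algebra map onto $\F$, hence determined by its kernel---is self-contained, whereas the paper cites Lemma~\ref{lem:Larry} (an integrality/Galois statement) to get maximality of $S\cap M_i$, which is more than is strictly needed for arbitrary $\F$-subalgebras $S$. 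What your approach buys is that the two directions become symmetric and purely computational once the generators of $I_S$ are in hand; what the paper's approach buys is that it avoids naming $I_S$ at all and makes the universal property do the bookkeeping.
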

\begin{proof}
Containment in one direction is easy:
$$(\pi_{S,1}^*,\pi_{S,2}^*)\left(\mSpec(R\otimes_SR)\right)\subseteq \mSpec(R)\times_S\mSpec(R).$$  
Indeed for any $M\in\mSpec(R\otimes_S R)$, and any $x\in\pi^{-1}_{S,1}(M)\cap S$, we have 
\begin{align*}
\pi_{S,1}(x)= & x\otimes 1\\
= & 1\otimes x\\
= & \pi_{S,2}(x)
\end{align*}
hence $x\in\pi^{-1}_{S,2}(M)\cap S$ as well, and this argument can be repeated, replacing $\pi_{S,1}$ with $\pi_{S,2}$.  Hence $\pi_1^{-1}(q^{-1}(M))\cap S=\pi_2^{-1}(q^{-1}(M))\cap S$.  

Conversely, fix $(M_1,M_2)\in\mSpec(R)\times\mSpec(R)$ such that $M_1\cap S=M_2\cap S$.  We must find a maximal ideal $M\subset R\otimes_SR$ such that $\pi_{S,i}^{-1}(M)=M_i$ for $i=1,2$.  Define the maps $\phi_1,\phi_2\colon R\rightarrow R/M_1\otimes R/M_2$ by $\phi_1(r)=\bar{r}\otimes 1$ and $\phi_2(r)=1\otimes \bar{\bar{r}}$ where $\bar{r}$ is reduction of $r$ modulo $M_1$ and $\bar{\bar{r}}$ is reduction of $r$ modulo $M_2$.  Note that $S\cap M_1$ is a maximal ideal in $S$ (by Lemma \ref{lem:Larry} below), hence the inclusion map $S\hookrightarrow R$ induces an isomorphism 
$S/S\cap M_1\rightarrow R/M_1$, and similarly for $M_2$.  Since $S\cap M_1=S\cap M_2$, we must therefore have $\bar{s}=\bar{\bar{s}}$ for each $s\in S$, which means that $\phi_1|_S=\phi_2|_S$.  Hence by the universal property for tensor products, there exists a unique map of $\F$ algebras $\Phi\colon R\otimes_SR\rightarrow R/M_1\otimes R/M_2$ such that $\Phi(f\otimes g)=\bar{f}\otimes\bar{\bar{g}}$.  Hence $M\coloneqq\ker(\Phi)$ is a maximal ideal in $R\otimes_SR$.  And since the map $\Phi$ clearly factors through the obvious map $\Phi'\colon R\otimes R\rightarrow R/M_1\otimes R/M_2$ via $q_S$, it follows that $q_S^{-1}(M)=M_1\otimes R+R\otimes M_2$, and the result follows from Lemma \ref{lem:FactorSpec}.
\end{proof}

For a proof of the following lemma we refer the reader to Smith's book on invariant theory \cite{Smith}, specifically Lemma 5.4.1 and Theorem 5.4.5.
\begin{lemma}
\label{lem:Larry}
Suppose that $A$ and $B$ are finitely generated $\F$ algebras, and $A\subset B$ with $B$ integral over $A$.  Further suppose that their respective fields of fractions $K_A\subset K_B$ is a Galois extension with Galois group $G$.  Then the Galois group $G$ acts transitively on the set of prime ideals in $B$ lying over a given prime ideal $P\subset A$.  (Also in this situation, given a prime ideal $P\subset B$ lying over a given prime ideal $Q\subset A$, $P$ is maximal if and only if $Q$ is maximal.)
\end{lemma}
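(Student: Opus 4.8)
The plan is to establish the two assertions by the standard integral-extension machinery --- lying-over, incomparability, prime avoidance, and the field norm --- which is what the results cited from \cite{Smith} ultimately rest on; along the way I would make explicit one hypothesis that the transitivity argument genuinely uses, namely that $A$ is integrally closed in $K_A$. This is harmless in the present paper, where in every application $A$ is a polynomial ring, hence normal.

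\emph{The maximality assertion.} I would handle this first, since it needs only that $B$ is integral over $A$. Given a prime $P\subset B$ lying over $Q\subset A$, so $P\cap A=Q$, pass to quotients to get an inclusion of integral domains $A/Q\hookrightarrow B/P$ with $B/P$ integral over $A/Q$. Then invoke the classical fact that for an integral extension of integral domains $C\subseteq D$, $C$ is a field if and only if $D$ is: if $C$ is a field and $0\ne d\in D$, a monic relation of least degree for $d$ over $C$ has nonzero constant term (because $D$ is a domain), and solving for that term exhibits $d^{-1}\in D$; conversely, if $D$ is a field and $0\ne c\in C$, then $c^{-1}\in D$ is integral over $C$, and clearing denominators in its monic relation puts $c^{-1}$ back in $C$. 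Since a domain is a field exactly when its zero ideal is maximal, this says precisely that $P$ is maximal if and only if $Q$ is; and lying-over guarantees that such a $P$ exists above any given $Q$.

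\emph{Transitivity of the Galois action.} Fix primes $P_1,P_2\subset B$ over $P\subset A$; I want $\sigma\in G$ with $\sigma(P_1)=P_2$. The ingredients are: each $\sigma\in G$ is an automorphism of $B$ (implicit in the statement, and automatic when $B$ is the integral closure of $A$ in $K_B$), so $G$ permutes the primes over $P$, because $\sigma(P_1)\cap A=\sigma(P_1\cap A)=\sigma(P)=P$; incomparability for the integral extension $A\subseteq B$, i.e.\ two primes of $B$ lying over the same prime of $A$, one contained in the other, must coincide; and prime avoidance. Suppose for contradiction that $P_2\ne\sigma(P_1)$ for every $\sigma\in G$. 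By incomparability, $P_2\not\subseteq\sigma(P_1)$ for every $\sigma$, so prime avoidance yields $x\in P_2$ with $x\notin\bigcup_{\sigma\in G}\sigma(P_1)$. Form the norm $y=\prod_{\sigma\in G}\sigma(x)$. It is $G$-invariant, so $y\in K_B^G=K_A$; it lies in $B$ since each $\sigma(x)$ does; and it is integral over $A$, so by normality of $A$ we conclude $y\in A$. Since $x\in P_2$ and $y\in xB$, we get $y\in P_2\cap A=P\subseteq P_1$, and as $P_1$ is prime some factor $\sigma(x)$ lies in $P_1$, i.e.\ $x\in\sigma^{-1}(P_1)$, contradicting the choice of $x$. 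Hence the $G$-orbit of $P_1$ is the whole fiber over $P$.

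\emph{Where the difficulty is.} The only delicate step is the implication ``$y\in B\cap K_A\Rightarrow y\in A$''. Without integral closedness of $A$ both this step and the conclusion can fail: taking $A=\{f\in\F[t]\mid f(1)=f(-1)\}\subset B=\F[t]$ (assuming $\operatorname{char}\F\ne2$), which is integral with $K_A=K_B=\F(t)$ so that $G$ is trivial, the maximal ideal of functions vanishing at $t=\pm1$ has the two distinct maximal ideals $(t-1)$ and $(t+1)$ of $B$ lying above it. With the normality hypothesis restored, the argument above is routine, and in substance it is the one given in \cite[\S5.4]{Smith}.
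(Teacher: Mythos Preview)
Your argument is correct and is precisely the standard one; the paper itself does not give a proof but simply defers to Smith's book \cite[Lemma~5.4.1 and Theorem~5.4.5]{Smith}, whose proofs are in essence the lying-over/incomparability argument for the maximality claim and the norm-plus-prime-avoidance argument for transitivity that you have written out.

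Your observation that the transitivity proof genuinely requires $A$ to be integrally closed in $K_A$ is well taken and worth recording. The lemma as stated omits this hypothesis, and your counterexample with $A=\{f\in\F[t]\mid f(1)=f(-1)\}\subset\F[t]$ shows that it cannot be dropped. In the paper's sole application one has $A=R^W$, which by Chevalley--Shephard--Todd is a polynomial ring and hence normal, so the omission is harmless there; but strictly speaking the lemma needs the extra clause.
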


Recall that $R^W$ and $R$ are both finitely generated $\F$ algebras, and that the extension $R^W\subset R$ is integral.  Moreover if $L^W$ is the field of fractions of $R^W$ and $L$ is the field of fractions of $R$ then $L^W\subset L$ is the field fixed by $W$ viewed as a group of field automorphisms of $L$.  Hence the extension $L^W\subset L$ is Galois with Galois group $W$.  Hence by Lemma \ref{lem:Larry} the group $W$ acts transitively on the set of maximal ideals in $R$ lying over a given maximal ideal in $S=R^W$.  Hence for any pair $M_1,M_2\in\mSpec(R)$ such that $M_1\cap R^W=M_2\cap R^W$ there exists $x\in W$ such that $M_2=x(M_1)$.  Replacing $S$ by $R^W$ in Lemma \ref{lem:mSpec} we thus obtain
\begin{align}
\label{eq:RWR}
\nonumber\left(\pi_1^*,\pi_2^*\right)\left(\mathcal{V}_m(I_W)\right)= & \mSpec(R)\times_{R^W}\mSpec(R)\\
\nonumber= & \left\{(M,x(M))\left|M\in\mSpec(R), \ x\in W\right.\right\}\\
= & \bigcup_{x\in W}\left\{(M,x(M))\left|M\in\mSpec(R)\right.\right\}.
\end{align}

Finally note that 
\begin{align}
\label{eq:MuxSpec}
\nonumber(\pi_1^*,\pi_2^*)\left(\mathcal{V}_m(I_x)\right)= & (\pi_1^*,\pi_2^*)\left(\hat{\mu}_x^*\left(\mSpec(R)\right)\right)\\
= & \left\{(M,x^{-1}(M))\left|M\in\mSpec(R)\right.\right\}
\end{align}
Indeed recall that $\hat{\mu}_x\circ\pi_1=\operatorname{id}_R$ and $\hat{\mu}_x\circ\pi_2=x$.  Thus given any maximal ideal $M\in \mSpec(R)$ we have $\pi_1^{-1}\circ\hat{\mu}_x^{-1}(M)=M$ and $\pi_2^{-1}\circ\hat{\mu}_x^{-1}(M)=x^{-1}(M)$.

Combining Equation \eqref{eq:RWR} and \eqref{eq:MuxSpec} we see that 
$$(\pi_1^*,\pi_2^*)\left(\mathcal{V}_m(I_W)\right)= \bigcup_{x\in W}(\pi_1^*,\pi_2^*)\left(\mathcal{V}_m(I_x)\right)$$
or, equivalently,
\begin{equation}
\label{eq:RegFcn}
\mathcal{V}_m(I_W)=\bigcup_{x\in W}\mathcal{V}_m(I_x).
\end{equation}
By Lemma \ref{lem:VmProp}, we deduce that 
$$\sqrt{I_W}=\sqrt{\bigcap_{x\in W}I_x}.$$
Note that $I_x$ is prime though, hence $\sqrt{\bigcap_{x\in W}I_x}=\bigcap_{x\in W}I_x$ since the intersection of prime ideals is always radical.  Therefore in order to prove that the localization map is injective, we need only show that $I_W$ is also radical, or equivalently, that $R\otimes_{R^W}R$ is reduced.  To wit:
\begin{lemma}
\label{lem:Reduced}
The ring $R\otimes_{R^W}R$ is reduced, i.e. it has no nilpotent elements.
\end{lemma}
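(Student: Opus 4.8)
The plan is to exhibit $R\otimes_{R^W}R$ as a subring of a ring that is visibly reduced. Recall that $L$ and $L^W$ denote the fraction fields of $R$ and $R^W$. By the Chevalley--Shephard--Todd theorem recalled in the introduction, $R$ is a \emph{free} $R^W$-module; fix a homogeneous $R^W$-basis $e_1,\dots,e_{|W|}$ of $R$. Tensoring $R=\bigoplus_{i=1}^{|W|}R^W e_i$ over $R^W$ with $R$ on the right shows that $R\otimes_{R^W}R=\bigoplus_{i=1}^{|W|}R\cdot(1\otimes e_i)$ is a free module over $R$ via the left factor map $\pi_{R^W,1}$. In particular it is torsion free over this copy of $R$, so the localization homomorphism
$$R\otimes_{R^W}R\ \hookrightarrow\ \bigl(R\otimes_{R^W}R\bigr)\otimes_R L$$
obtained by inverting the left copy of $R$ is injective. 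It therefore suffices to show that $\bigl(R\otimes_{R^W}R\bigr)\otimes_R L$ is reduced.

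To identify this ring, I would base change through the left factor and then factor the structure map $R^W\hookrightarrow R\subseteq L$ through $L^W$, obtaining natural isomorphisms of rings
$$\bigl(R\otimes_{R^W}R\bigr)\otimes_R L\ \cong\ L\otimes_{R^W}R\ \cong\ L\otimes_{L^W}\bigl(L^W\otimes_{R^W}R\bigr).$$
Now $L^W\otimes_{R^W}R\cong(R^W\setminus\{0\})^{-1}R$ is a localization of the domain $R$, hence a domain, and it has $\{e_1,\dots,e_{|W|}\}$ as an $L^W$-basis, hence it is a finite-dimensional $L^W$-algebra; a finite-dimensional algebra over a field that is a domain is itself a field. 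Being a field that contains $R$ and is contained in $L$, it must equal $L$. Thus $\bigl(R\otimes_{R^W}R\bigr)\otimes_R L\cong L\otimes_{L^W}L$.

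Finally, as recalled in the paragraph following Lemma \ref{lem:Larry}, $L/L^W$ is a Galois extension with group $W$; in particular it is separable, so $L\otimes_{L^W}L\cong\prod_{x\in W}L$ is reduced. Hence $R\otimes_{R^W}R$ embeds into a reduced ring and is itself reduced, which is the statement of the lemma; as a bonus this gives a second proof that $I_W=\bigcap_{x\in W}I_x$, i.e. of injectivity of the localization map. No step here is genuinely hard; the only point requiring care is the chain of identifications in the second paragraph, where one must be consistent about which factor is used to form each tensor product and must invoke flatness (freeness) of $R$ over $R^W$, and of $R\otimes_{R^W}R$ over $R$, to commute the various base changes past the tensor products. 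Granting that, the argument is routine.
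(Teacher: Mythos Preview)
Your argument is correct and takes a genuinely different route from the paper's own proof. The paper argues by contradiction: picking a nonzero homogeneous nilpotent $F$ of minimal degree, it applies the (first-factor) operators $\phi_{s,1}$ and $\Delta_{s,1}$ to force $F$ to be $W$-invariant in the left tensor factor, then writes $F=1\otimes g$ and deduces that $g\in R$ is a nonzero nilpotent, a contradiction. Your approach instead embeds $R\otimes_{R^W}R$ (via freeness over the left copy of $R$) into the localization $L\otimes_{R^W}R$, identifies this with $L\otimes_{L^W}L$, and invokes the Galois structure of $L/L^W$ to see that the latter is a product of copies of $L$.

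Each approach has its merits. The paper's proof stays within the graded, operator-theoretic framework that drives the rest of the article, and in that sense is thematically consistent. Your proof is more structural: it uses only freeness of $R$ over $R^W$ and the fact that $L/L^W$ is Galois, and it immediately yields the stronger conclusion $R\otimes_{R^W}R\hookrightarrow\prod_{x\in W}L$, which (as you observe) recovers injectivity of the localization map without the $\mathcal{V}_m$ and radical-ideal arguments of the preceding pages. The one place to be careful, as you already flag, is keeping track of which $R$-module structure is being localized; once one fixes the convention that $L$ is tensored in via the left factor map, the identifications $(R\otimes_{R^W}R)\otimes_R L\cong L\otimes_{R^W}R\cong L\otimes_{L^W}L$ are routine.
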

\begin{proof}
Recall that $R$ is a finitely generated free module over the subalgebra $R^W\subset R$.  Choose and fix a basis, say $\left\{e_1,\ldots,e_N\right\}\subset R$.  Thus $R\otimes_{R^W}R$ is a finitely generated free left $R$ module with a basis $\left\{1\otimes e_1,\ldots,1\otimes e_N\right\}$.

Now suppose, by way of contradiction, that $R\otimes_{R^W}R$ is not reduced.  Then there is a non-zero nilpotent element, say $F\in R\otimes_{R^W}R$.  Since $R\otimes_{R^W}R$ is graded, we may assume that $F$ is homogeneous and for argument's sake, we may assume that it has minimal degree.

First note that if $F\in R\otimes_{R^W}R$ is nilpotent, then so is $\phi_{s,1}(F)$ for each $s\in W$, since $\phi_{s,1}$ is a ring homomorphism.  But then $F-\phi_{s,1}(F)$ and hence $\Delta_{s,1}(F)$ must also be nilpotent for each pseudo-reflection $s\in W$.  On the other hand, since the degree of $\Delta_{s,1}(F)$ is strictly less than the degree of $F$ and since we chose $F$ to have minimal degree, we must conclude that $\Delta_{s,1}(F)$ zero for each pseudo-reflection $s\in W$.  Hence we must have that $\phi_{s,1}(F)=F$ for all pseudo-reflections $s\in W$.

Now write $F$ in terms of our fixed basis above, i.e.
$$F=\sum_{i=1}^N f_i(1\otimes e_i)=\sum_{i=1}^Nf_i\otimes e_i, \ \ \ f_i\in R.$$
Note that since $\left\{1\otimes e_1,\ldots,1\otimes e_N\right\}$ are linearly independent, $\phi_{s,1}(F)=F$ implies that for $1\leq i\leq N$ we have $s(f_i)=f_i$.  But since this holds for each pseudo-reflection $s\in W$, and since $W$ is generated by its pseudo-reflections, we must conclude that $f_i\in R^W$ for each $1\leq i\leq N$.  Therefore we may write 
$$F=\sum_{i=1}^Nf_i\otimes e_i= 1\otimes \left(\sum_{i=1}^N f_i\cdot e_i\right).$$
This means that $F$ is in the image of the factor map $\pi_{W,2}\colon R\rightarrow R\otimes_{R^W}R$.  On the other hand, the factor map is injective--simply compose it with the usual multiplication map to get the identity map on $R$!  Hence if $F$ is nilpotent, then so is $\left(\sum_{i=1}^N f_i\cdot e_i\right)\in R$, and if $F$ is non-zero, then so is $\left(\sum_{i=1}^N f_i\cdot e_i\right)$.  But this is impossible since $R$ is reduced, and there is our contradiction!
\end{proof}

\begin{remark}
Lemma \ref{lem:Reduced} was also proved by J. Watanabe using a different argument \cite{W0}.
\end{remark} 

We have thus proved the following result.
\begin{proposition}
\label{prop:Injective}
The localization map $\mu\colon R\otimes_{R^W}R\rightarrow\bigoplus_{x\in W}R$ is injective.
\end{proposition}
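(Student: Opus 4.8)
The plan is to assemble the pieces already established in this section; no new idea is needed beyond recognizing that the substantive work is done. First I would recall the two formal facts coming from the universal property of the tensor product: the map $q_W\colon R\otimes R\to R\otimes_{R^W}R$ is surjective, and $\mu\circ q_W=\hat\mu$. Granting these, $\mu$ is injective if and only if $\ker(q_W)=\ker(\hat\mu)$: indeed, if these kernels agree and $\mu(F)=0$, lift $F$ to some $\tilde F\in R\otimes R$ with $q_W(\tilde F)=F$; then $\hat\mu(\tilde F)=\mu(q_W(\tilde F))=0$, so $\tilde F\in\ker(\hat\mu)=\ker(q_W)$ and $F=q_W(\tilde F)=0$. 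Since $\ker(q_W)=I_W$ and $\ker(\hat\mu)=\bigcap_{x\in W}\ker(\hat\mu_x)=\bigcap_{x\in W}I_x$, the desired equality is precisely Equation \eqref{eq:WTS}.

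Next I would prove Equation \eqref{eq:WTS}. Starting from Equation \eqref{eq:RegFcn}, namely $\mathcal{V}_m(I_W)=\bigcup_{x\in W}\mathcal{V}_m(I_x)$, apply Lemma \ref{lem:VmProp}(ii) to rewrite the right-hand side as $\mathcal{V}_m\!\left(\bigcap_{x\in W}I_x\right)$. Because $R\otimes R$ is a finitely generated $\F$-algebra, Lemma \ref{lem:VmProp}(iii) then yields $\sqrt{I_W}=\sqrt{\bigcap_{x\in W}I_x}$. Now observe that each $I_x=\ker(\hat\mu_x)$ is prime, since $\hat\mu_x\colon R\otimes R\to R$ is a surjection onto the integral domain $R$; hence $\bigcap_{x\in W}I_x$, being an intersection of prime ideals, is radical and equals its own radical. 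On the other side, Lemma \ref{lem:Reduced} says $R\otimes_{R^W}R=(R\otimes R)/I_W$ is reduced, so $I_W=\sqrt{I_W}$. Combining these gives
$$I_W=\sqrt{I_W}=\sqrt{\textstyle\bigcap_{x\in W}I_x}=\bigcap_{x\in W}I_x,$$
which is Equation \eqref{eq:WTS}, and the proposition follows.

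There is no genuine obstacle at this stage: all the weight has been carried by the computation of $\mathcal{V}_m(I_W)$ in Equations \eqref{eq:RWR}–\eqref{eq:RegFcn} (which rests on Lemma \ref{lem:Larry} and the transitivity of the $W$-action on fibers) and by the reducedness result Lemma \ref{lem:Reduced}. The only point demanding a moment's care in writing up the proof of the proposition is the initial reduction — verifying that $q_W$ is onto and that $\mu\circ q_W=\hat\mu$ — but both are immediate from the universal property quoted at the start of the section.
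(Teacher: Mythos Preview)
Your proposal is correct and follows essentially the same route as the paper: reduce injectivity of $\mu$ to Equation \eqref{eq:WTS} via the factorization $\mu\circ q_W=\hat\mu$, then deduce \eqref{eq:WTS} from the equality of maximal-spectrum loci \eqref{eq:RegFcn} together with Lemma \ref{lem:VmProp}, the primality of each $I_x$, and the reducedness result Lemma \ref{lem:Reduced}. The only difference is cosmetic---you are a bit more explicit about invoking parts (ii) and (iii) of Lemma \ref{lem:VmProp} and about the surjectivity of $q_W$, whereas the paper absorbs these remarks into the running discussion preceding the proposition.
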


\section{Part II:  The Image of the Localization Map}
We consider the set $\operatorname{Maps}(W,R)\cong\bigoplus_{x\in W}R$.  It is a graded ring with addition and multiplication defined pointwise, i.e. 
\begin{align*}
(F+G)(x)= & F(x)+G(x)\\
(F\cdot G)(x)= & F(x)\cdot G(x)
\end{align*}
We endow it with an $R$-module structure by taking the diagonal action, i.e.
$$(r\cdot F)(x)=r\cdot F(x).$$
In fact, regarding $R\subset\operatorname{Maps}(W,R)$ as the constant maps gives $\operatorname{Maps}(W,R)$ the structure of an $R$-algebra.

There is also right action of $W$ on the ring $\operatorname{Maps}(W,R)$ given by
\begin{equation}
\label{eq:HAction}
(F\cdot w)(x)\coloneqq F(x\cdot w^{-1}).
\end{equation}
The corresponding action of $W$ on $R\otimes_{R^W}R$, i.e. the one that makes the localization map $W$-equivariant, is given by
\begin{equation}
\label{eq:RAction}
(f\otimes g)\cdot w\coloneqq f\otimes w^{-1}(g).
\end{equation}

For each pseudo-reflection $s\in s(W)$ choose and fix a co-root $\ell_s\in V^*$.  This choice determines a map $L_s\colon W\rightarrow R$ defined by 
$$L_s(x)\coloneqq x\left(\ell_s\right).$$

Let $s\in s(W)$ be any pseudo-reflection, and let $i\in\Z$ be any integer.  Define the operator $_iA_s\colon \Maps(W,R)\rightarrow \Maps(W,Q)$ by 
\begin{equation}
\label{eq:Ais}
_iA_s(F)(x)=\sum_{j=0}^{|s|-1}\frac{F\cdot s^{-j}}{L_s\cdot s^{-j}}(x).
\end{equation}
Define the subset $\mathcal{H}_W\subset\Maps(W,R)$ by
$$\mathcal{H}_W=\left\{\left.F\colon W\rightarrow R\right| {_iA_s}(F)(x)\in R, \ \ \forall \ x\in W, \ \ \forall \ s\in s(W), \ \ \forall \ i\leq |s|-1\right\}.$$

The first thing we note is that $\mathcal{H}_W$ is an $R$-submodule of $\operatorname{Maps}(W,R)$, because the operator ${_iA_s}\colon\mathcal{H}_W\rightarrow \operatorname{Maps}(W,R)(-i)$ is an $R$-module map.  In fact we can say a bit more:
\begin{lemma}
\label{lem:One}
The constant map $1\colon W\rightarrow R$ assigning the value $1\in\F$ to each $x\in W$ is in the subset $\mathcal{H}_W$.
\end{lemma}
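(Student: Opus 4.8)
The plan is to verify the defining condition of $\mathcal{H}_W$ directly for $F=1$: I will compute ${_iA_s}(1)(x)$ in closed form and check that it lies in $R$ for every $x\in W$, every $s\in s(W)$, and every $i\le|s|-1$. The case $i\le 0$ is immediate, since then each summand $\frac{1}{L_s(x\cdot s^j)^i}=L_s(x\cdot s^j)^{-i}$ is a nonnegative power of a linear polynomial, so the whole sum already lies in $R$; thus the only substantive range is $1\le i\le|s|-1$.

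The key point is that the chosen co-root $\ell_s$ is an eigenvector for the action of $s$ on $V^*$. Indeed $s$ fixes the hyperplane $H_s$ pointwise, so it stabilizes the line $\operatorname{Ann}(H_s)=\F\cdot\ell_s\subseteq V^*$, and hence $s(\ell_s)=\zeta_s\,\ell_s$ for some $\zeta_s\in\F^\times$. Because $|s|$ divides $|W|\in\F^\times$, the operator $s$ is diagonalizable (its minimal polynomial divides $X^{|s|}-1$, which is separable over $\F$), and comparing $s^{|s|}=\operatorname{id}$ with the action of $s$ on $V^*$ shows that $\zeta_s$ has order exactly $|s|$, i.e. $\zeta_s$ is a primitive $|s|$-th root of unity. (More explicitly: in a basis adapted to $V=H_s\oplus\F v_0$ with $sv_0=\eta v_0$, $s$ acts on $V^*$ with eigenvalue $1$ on a codimension-one subspace and with eigenvalue $\zeta_s=\eta^{-1}$ on $\F\ell_s$, so $s^k=\operatorname{id}$ iff $\zeta_s^k=1$.) Consequently $s^j(\ell_s)=\zeta_s^{\,j}\ell_s$, and since $W$ acts $\F$-linearly on $R$,
$$L_s(x\cdot s^j)=(x s^j)(\ell_s)=x\bigl(\zeta_s^{\,j}\ell_s\bigr)=\zeta_s^{\,j}\,L_s(x)\qquad\text{for all }x\in W,\ 0\le j\le|s|-1.$$

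Substituting this into the definition \eqref{eq:Ais} of ${_iA_s}$, for $1\le i\le|s|-1$ we obtain
$${_iA_s}(1)(x)=\sum_{j=0}^{|s|-1}\frac{1}{L_s(x\cdot s^j)^i}=\frac{1}{L_s(x)^i}\sum_{j=0}^{|s|-1}\zeta_s^{-ij}.$$
Since $\zeta_s$ is a primitive $|s|$-th root of unity and $1\le i\le|s|-1$, we have $\zeta_s^{-i}\ne 1$ while $(\zeta_s^{-i})^{|s|}=1$, so the geometric sum $\sum_{j=0}^{|s|-1}(\zeta_s^{-i})^{j}$ vanishes; hence ${_iA_s}(1)(x)=0\in R$. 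Combined with the case $i\le 0$, this shows ${_iA_s}(1)(x)\in R$ for all $x$, $s$, and $i\le|s|-1$, i.e. $1\in\mathcal{H}_W$.

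The only non-formal ingredient is the claim that the eigenvalue $\zeta_s$ of $s$ on the transverse line $\F\ell_s\subseteq V^*$ is a \emph{primitive} $|s|$-th root of unity; everything else is a geometric-series identity. That claim rests on $s$ being diagonalizable, which is precisely where the hypothesis $|W|\in\F^\times$ (hence $|s|\in\F^\times$, so that $X^{|s|}-1$ is separable and $\F$ contains a primitive $|s|$-th root of unity) is used. I expect this to be the main — though mild — obstacle.
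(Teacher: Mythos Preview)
Your proof is correct and takes the same approach as the paper: both compute ${_iA_s}(1)(x)$ using $L_s(xs^j)=\lambda_s^{\,j}L_s(x)$ and the vanishing of the geometric sum $\sum_{j}\lambda_s^{-ij}$ for $1\le i\le|s|-1$. You are in fact slightly more careful than the paper, separating out the trivial range $i\le 0$ (the paper's displayed computation asserts the sum is $0$ uniformly, though for $i=0$ it is $|s|$---still in $R$, of course) and supplying the justification that the eigenvalue $\zeta_s$ is a \emph{primitive} $|s|$-th root of unity.
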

\begin{proof}
For $s\in s(W)$ and $i\leq |s|-1$ we have
\begin{align*}
_iA_s(1)(x)= & \sum_{j=0}^{|s|-1}\frac{1}{L_s^i(x\cdot s^j)}\\
= & \sum_{j=0}^{|s|-1}\frac{1}{\lambda_s^{ij}\cdot x(\ell_s)^i}\\
= & \frac{1}{x(\ell_s)^i}\sum_{j=0}^{|s|-1}\lambda_s^{-ij}\\
= & 0
\end{align*}
and the result follows.
\end{proof}
It follows that the subset $\mathcal{H}_W$ contains all constant maps.

We also note that the subset $\mathcal{H}_W$ is closed under the $W$-action. 
\begin{lemma}
\label{lem:WClosed}
If $F\in\mathcal{H}_W$ then so is $F\cdot w\in\mathcal{H}_W$ for any $w\in W$.
\end{lemma}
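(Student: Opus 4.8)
The plan is to observe that the right $W$-action on $\Maps(W,R)$ merely permutes the defining conditions of $\mathcal{H}_W$, up to nonzero scalar corrections. Fix $F\in\mathcal{H}_W$ and $w\in W$; I will show that for every $x\in W$, every $s\in s(W)$, and every $i\le|s|-1$, the value ${_iA_s}(F\cdot w)(x)$ equals a unit multiple of ${_iA_{s'}}(F)(x')$ for a suitable conjugate pseudo-reflection $s'$ and group element $x'$. Since $F\in\mathcal{H}_W$ the latter lies in $R$, hence so does the former, and that is precisely the condition for $F\cdot w$ to lie in $\mathcal{H}_W$.

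First I would unwind the definitions. By \eqref{eq:HAction}, $(F\cdot w)(x\cdot s^j)=F(x\cdot s^j\cdot w^{-1})$, and the group identity $x s^j w^{-1}=(xw^{-1})(wsw^{-1})^j$ suggests setting $s'\coloneqq wsw^{-1}$ and $x'\coloneqq xw^{-1}$. Here $s'\in s(W)$: being conjugate to $s$ it is nontrivial with $|s'|=|s|$, and it fixes the hyperplane $w(H_s)$ pointwise, so $H_{s'}=w(H_s)$. With this notation the numerators of
\[{_iA_s}(F\cdot w)(x)=\sum_{j=0}^{|s|-1}\frac{F(x\cdot s^j\cdot w^{-1})}{L_s^i(x\cdot s^j)}\]
are exactly $F(x'\cdot(s')^j)$.

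It remains to compare the denominators. Since $\ell_{s'}$ vanishes on $H_{s'}=w(H_s)$, the linear form $w^{-1}(\ell_{s'})$ vanishes on $H_s$, so $w^{-1}(\ell_{s'})=c\,\ell_s$ for some $c\in\F^\times$ (depending only on $w$ and $s$). Hence $L_{s'}(x'\cdot(s')^j)=(x\cdot s^j\cdot w^{-1})(\ell_{s'})=(x\cdot s^j)\bigl(w^{-1}(\ell_{s'})\bigr)=c\cdot L_s(x\cdot s^j)$, and comparing the two sums termwise yields ${_iA_{s'}}(F)(x')=c^{-i}\cdot{_iA_s}(F\cdot w)(x)$. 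Since $F\in\mathcal{H}_W$ and $i\le|s|-1=|s'|-1$, we have ${_iA_{s'}}(F)(x')\in R$, whence ${_iA_s}(F\cdot w)(x)=c^i\cdot{_iA_{s'}}(F)(x')\in R$ because $c^i\in\F^\times\subseteq R$. As $x$, $s$ and $i$ were arbitrary, this proves $F\cdot w\in\mathcal{H}_W$. The only point that needs care is this last comparison of the fixed co-roots $\ell_s$ and $\ell_{s'}$: since the co-roots are chosen once and for all with no prescribed behavior under conjugation, one obtains only the scalar relation $w^{-1}(\ell_{s'})=c\,\ell_s$, and the argument goes through precisely because $c$ is a unit; everything else is the bookkeeping substitution $(x,s)\mapsto(xw^{-1},wsw^{-1})$.
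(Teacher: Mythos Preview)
Your argument is correct and follows essentially the same route as the paper's own proof: both compute ${_iA_s}(F\cdot w)(x)$ by substituting $(x,s)\mapsto(xw^{-1},wsw^{-1})$ and using that $w(\ell_s)$ and $\ell_{wsw^{-1}}$ differ by a unit in $\F$, arriving at ${_iA_s}(F\cdot w)(x)=c^i\cdot{_iA_{wsw^{-1}}}(F)(xw^{-1})$. Your write-up is in fact slightly cleaner than the paper's (which has a minor slip writing $c$ in place of $c^i$ and omits the argument $xw^{-1}$ in the final line).
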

\begin{proof}
Fix $s\in s(W)$ and $i\leq |s|-1$.  For each $x\in W$ we have 
\begin{align*}
{_iA_s}(F\cdot w)(x)= & \sum_{j=0}^{|s|-1}\frac{(F\cdot w)(x\cdot s^j)}{L^i_s(x\cdot s^j)}\\
= & \sum_{j=0}^{|s|-1}\frac{F(x\cdot s^j\cdot w^{-1})}{x\cdot s^j\left(\ell_s\right)^i}\\
= & \sum_{j=0}^{|s|-1}\frac{F(x\cdot w^{-1}\cdot\left(w\cdot s^j\cdot w^{-1}\right))}{\lambda_s^{ij}x\cdot w^{-1}\left(w(\ell_s)\right)^i}\\
= & \sum_{j=0}^{|s|-1}\frac{F(x\cdot w^{-1}\cdot\left(w\cdot s^j\cdot w^{-1}\right))}{\lambda_s^{ij}x\cdot w^{-1}\left(c\cdot\ell_{wsw^{-1}}\right)^i}\\
= & \sum_{j=0}^{|s|-1}\frac{F(x\cdot w^{-1}\cdot\left(w\cdot s^j\cdot w^{-1}\right))}{c\cdot x\cdot w^{-1}\left(ws^jw^{-1}\right)\left(\ell_{wsw^{-1}}\right)^i}\\
= & \frac{1}{c}\cdot {_iA_{wsw^{-1}}}(F)
\end{align*}
which is clearly in $R$ since $F\in\mathcal{H}_W$.
\end{proof}

Note that for each pseudo-reflection $s\in s(W)$ and each integer $i\leq |s|-1$, the map ${_iA_s}(F)\colon W\rightarrow R$ is $s$-invariant.  Also, if $F\in\mathcal{H}_W$ is $s$-invariant already, then we have 
$$_iA_s(F)=F\cdot{_iA_s}(1).$$
The operators ${_iA_s}$ can be viewed as projection operators onto the $s$-invariant pieces of $\mathcal{H}_W$.
 
\begin{lemma}
\label{lem:isDecomp}
If $F\in\mathcal{H}_W$ then we have 
$$F=\frac{1}{|s|}\cdot\left(\sum_{j=0}^{|s|-1}{_jA_s}(F)\cdot L_s^j.\right)$$
\end{lemma}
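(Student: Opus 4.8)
The plan is to prove the identity pointwise: fix $x\in W$ and show that $F(x)$ equals $\frac{1}{|s|}\sum_{j=0}^{|s|-1}{_jA_s}(F)(x)\cdot L_s^j(x)$. Expanding the right-hand side using the definition in Equation \eqref{eq:Ais}, we get
$$\frac{1}{|s|}\sum_{j=0}^{|s|-1}\left(\sum_{k=0}^{|s|-1}\frac{F(x\cdot s^k)}{L_s^j(x\cdot s^k)}\right)L_s^j(x).$$
The key observation is that $L_s(x\cdot s^k)=\lambda_s^k\cdot L_s(x)$ for the eigenvalue $\lambda_s\in\F^\times$ of $s$ on $\ell_s$ (a primitive $|s|$-th root of unity, already used in Lemmas \ref{lem:One} and \ref{lem:WClosed}), so that $L_s^j(x)/L_s^j(x\cdot s^k)=\lambda_s^{-jk}$. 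Hence the double sum becomes $\frac{1}{|s|}\sum_{k}F(x\cdot s^k)\sum_{j=0}^{|s|-1}\lambda_s^{-jk}$, and the inner geometric sum over $j$ equals $|s|$ when $k\equiv 0$ and $0$ otherwise. Only the $k=0$ term survives, leaving exactly $F(x)$.

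First I would record the eigenvalue relation $L_s(x\cdot s^j)=\lambda_s^j L_s(x)$ explicitly (it follows from $s(\ell_s)=\lambda_s\ell_s$ and $L_s(y)=y(\ell_s)$), since this is the computational engine and it is used implicitly in the surrounding lemmas. Then I would substitute it into the double sum, interchange the order of summation (finite sums over a field where $|W|\in\F^\times$, so no subtlety), and evaluate the root-of-unity sum $\sum_{j=0}^{|s|-1}\lambda_s^{-jk}$ via the standard orthogonality relation. One should note that division by $L_s$ is taking place in $\operatorname{Quot}(R)=Q$, so the manipulations are carried out there; the final answer $F(x)\in R$ confirms consistency but the intermediate steps live in $Q$.

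There is no serious obstacle here — this is a routine root-of-unity averaging argument, essentially the same computation as in Lemma \ref{lem:One} but weighted by $L_s^j$. The only point requiring any care is bookkeeping: making sure the $x\cdot s^j$ shift in ${_jA_s}$ and the evaluation at $x$ in the factor $L_s^j$ combine to give $\lambda_s$-powers with the correct sign in the exponent, so that the orthogonality relation picks out the diagonal term $k=0$ rather than some other residue class. Once that is pinned down, the identity falls out immediately, and it is worth remarking that this expresses $F$, restricted to each right $s$-orbit, as an explicit combination of the $s$-invariant projections ${_jA_s}(F)$ times powers of the ``generating class'' $L_s$ — which is precisely the structure that will reappear in the linear hypergraph description of Section 4.
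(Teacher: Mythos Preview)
Your proposal is correct and is essentially identical to the paper's own proof: both expand the definition of ${_jA_s}(F)$, use the eigenvalue relation $L_s(x\cdot s^k)=\lambda_s^k L_s(x)$ to replace $L_s^j(x)/L_s^j(x\cdot s^k)$ by $\lambda_s^{-jk}$, swap the order of summation, and invoke the orthogonality relation $\sum_{j=0}^{|s|-1}\lambda_s^{-jk}=|s|\cdot\delta_{k,0}$. The only cosmetic difference is that the paper writes the computation using the right $W$-action notation $F\cdot s^{-k}$ rather than evaluating pointwise at a fixed $x$, but the content is the same.
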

\begin{proof}
We compute the RHS:
\begin{align*}
\sum_{j=0}^{|s|-1}{_jA_s}(F)\cdot L_s^j= & \sum_{j=0}^{|s|-1}\left(\sum_{k=0}^{|s|-1}\frac{F\cdot s^{-k}}{L_s^j\cdot s^{-k}}\right)\cdot L_s^j\\
= & \sum_{j=0}^{|s|-1}\sum_{k=0}^{|s|-1}\frac{F\cdot s^{-k}}{\lambda_s^{jk}\cdot L_s^j}\cdot L_s^j\\
= & \sum_{j=0}^{|s|-1}\sum_{k=0}^{|s|-1}\lambda_s^{-jk} F\cdot s^{-k}\\
= & \sum_{k=0}^{|s|-1}F\cdot s^{-k} \left(\sum_{j=0}^{|s|-1}\lambda_s^{-jk}\right) \\
= & F\cdot s^0\cdot\left(\sum_{j=0}^{|s|-1}\lambda_s^{j\cdot 0}\right)+\sum_{k=1}^{|s|-1}F\cdot s^{-k}\left(\sum_{j=0}^{|s|-1}\lambda_s^{jk}\right)
= & F\cdot |s|
\end{align*}
and the result follows.
\end{proof}

The following lemma is analogous to the usual Leibniz rule for ordinary $\Delta$ operators.
\begin{lemma}
\label{lem:GenLeib}
If $F,G\in \mathcal{H}_W$ then we have 
$${_iA_s}(F\cdot G)=\sum_{a=0}^{|s|-1}{_aA_s}(F)\cdot{_{i-a}A_s}(G).$$
\end{lemma}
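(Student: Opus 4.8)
The plan is to prove the identity by a direct ``discrete Fourier'' computation: evaluate both sides at an arbitrary $x\in W$ and expand everything in powers of the eigenvalue $\lambda_s$ of $s$ on $\ell_s$. The starting point is the same normal form already used in the proofs of Lemmas \ref{lem:One} and \ref{lem:isDecomp}: since $L_s(x\cdot s^j)=\lambda_s^{\,j}L_s(x)$, one has for every integer $m$
\[
{_mA_s}(F)(x)=\frac{1}{L_s^m(x)}\sum_{j=0}^{|s|-1}\lambda_s^{-mj}\,F(x\cdot s^j).
\]
When $m\le 0$ this equals $L_s^{-m}(x)\sum_j\lambda_s^{-mj}F(x\cdot s^j)$, a genuinely polynomial expression, so each negative-index operator ${_mA_s}$ is automatically $\Maps(W,R)$-valued; this is what makes the right-hand side of the asserted formula well defined, even though the index $i-a$ runs through negative values, and a fortiori it lands in $\Maps(W,\operatorname{Quot}(R))$ (indeed in $\Maps(W,R)$ when $F,G\in\mathcal{H}_W$).

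From here I would substitute $b=i-a$ into the normal form above, multiply ${_aA_s}(F)(x)$ by ${_{i-a}A_s}(G)(x)$ so that the two prefactors $L_s^{-a}(x)$ and $L_s^{-(i-a)}(x)$ collapse to $L_s^{-i}(x)$, and then sum over $a=0,\dots,|s|-1$. Interchanging the order of summation isolates the inner sum $\sum_{a=0}^{|s|-1}\lambda_s^{a(k-j)}$, where $j$ and $k$ are the summation indices coming from $F$ and from $G$ respectively. Orthogonality of the characters of $\Z/|s|\Z$ — exactly the geometric-series identity already exploited in Lemma \ref{lem:isDecomp} — collapses this inner sum, leaving precisely (up to a scalar) the normal form of ${_iA_s}(F\cdot G)(x)=L_s^{-i}(x)\sum_j\lambda_s^{-ij}(FG)(x\cdot s^j)$. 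An entirely equivalent route, which I would probably prefer to write up, is to expand $F$ and $G$ by the decomposition of Lemma \ref{lem:isDecomp}, multiply the two expansions, and apply ${_iA_s}$ term by term, using that each ${_aA_s}(F)$ and ${_bA_s}(G)$ is $s$-invariant together with the elementary rule ${_iA_s}(H\cdot L_s^{\,p})=|s|\,H\cdot L_s^{\,p-i}$ if $|s|\mid p-i$ and $0$ otherwise, valid for $s$-invariant $H$.

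The one step I expect to require genuine care is the index bookkeeping in this second route: in the product $F\cdot G$ the exponent $a+b$ ranges over $0,\dots,2(|s|-1)$, so after reducing modulo $|s|$ the ``wrapped'' terms with $a+b\ge|s|$ appear with an extra factor $L_s^{|s|}$ and must be matched back against negative-index operators. The reconciliation is the periodicity relation $L_s^{|s|}\cdot{_{m+|s|}A_s}(G)={_mA_s}(G)$, immediate from the normal form together with the fact that $L_s^{|s|}$ is $s$-invariant; it converts $\sum_{a>i}{_aA_s}(F)\cdot{_{i+|s|-a}A_s}(G)\cdot L_s^{|s|}$ into $\sum_{a>i}{_aA_s}(F)\cdot{_{i-a}A_s}(G)$ and so reassembles the complete sum $\sum_{a=0}^{|s|-1}$. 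Everything else is routine root-of-unity algebra strictly parallel to the proofs of Lemmas \ref{lem:One} and \ref{lem:isDecomp}. One caveat worth flagging: under the normalization of ${_iA_s}$ fixed in \eqref{eq:Ais}, the computation I have just outlined produces the identity with a factor $\tfrac{1}{|s|}$ on the right-hand side, consistent with the $\tfrac1{|s|}$ already present in Lemma \ref{lem:isDecomp}; without that constant the two sides differ by $|s|$ (as one checks already for $W=\Z/2$), so I would either insert the $\tfrac1{|s|}$ or rescale the operators.
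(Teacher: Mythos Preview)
Your proposal is correct, and your caveat about the missing $\tfrac{1}{|s|}$ is on point: the paper's own computation also ends at $\tfrac{1}{|s|}\sum_{a=0}^{|s|-1}{_aA_s}(F)\cdot{_{i-a}A_s}(G)$, so the displayed statement is off by exactly that constant.

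The paper's argument is, however, shorter than either of your two routes. Instead of expanding \emph{both} $F$ and $G$ via Lemma~\ref{lem:isDecomp} (your preferred write-up) or running a double Fourier sum with orthogonality (your first route), it expands \emph{only} $F$ as $F=\tfrac{1}{|s|}\sum_{a}{_aA_s}(F)\,L_s^a$, pulls the $s$-invariant factor ${_aA_s}(F)$ through ${_iA_s}$, and then invokes the one-line identity
\[
{_iA_s}(L_s^a\cdot G)={_{i-a}A_s}(G),
\]
which is immediate from the definition since $L_s^a/L_s^i=L_s^{-(i-a)}$ pointwise. This sidesteps entirely the ``wrapping'' bookkeeping you flagged: there is no exponent $a+b$ running up to $2(|s|-1)$, no reduction modulo $|s|$, and no need for the periodicity relation $L_s^{|s|}\cdot{_{m+|s|}A_s}(G)={_mA_s}(G)$. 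Your approaches work and yield the same result; the paper's trick of leaving $G$ unexpanded just buys a cleaner computation with no index collisions.
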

\begin{proof}
By Lemma \ref{lem:isDecomp} we may write $F=\frac{1}{|s|}\sum_{a=0}^{|s|-1}{_aA_s}(F)\cdot L_s^a$.  Note that $_aA_s(F)$ is $s$-invariant.  Also note that $_iA_s(L_s^a\cdot G)={_{i-a}A_s}(G)$ for every $G$ and every $a$.  Putting it all together, we have 
\begin{align*}
{_iA_s}(F\cdot G)= & {_iA_s}\left(\frac{1}{|s|}\sum_{a=0}^{|s|-1}{_aA_s}(F)\cdot L_s^a\cdot G\right)\\
= & \frac{1}{|s|}\sum_{a=0}^{|s|-1}{_iA_s}\left({_aA_s}(F)\cdot L_s^a\cdot G\right)\\
= & \frac{1}{|s|}\sum_{a=0}^{|s|-1}{_aA_s}(F)\cdot {_iA_s}(L_s^a\cdot G)\\
= & \frac{1}{|s|}\sum_{a=0}^{|s|-1}{_aA_s}(F)\cdot {_{i-a}A_s}(G)
\end{align*}
as claimed.
\end{proof}
Note that an immediate consequence of Lemma \ref{lem:GenLeib} is that the subset $\mathcal{H}_W$ is closed under multiplication.  In particular, we see that the subset $\mathcal{H}_W\subset\operatorname{Maps}(W,R)$ is an $R$-subalgebra.  The next proposition is fundamental to our main results.

\begin{proposition}
\label{prop:AisClosed}
If $F\in\mathcal{H}_W$ then so is $_iA_s(F)\in \mathcal{H}_W$.
\end{proposition}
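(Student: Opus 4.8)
The plan is to translate membership in $\mathcal H_W$ into a family of polynomial divisibility statements and to verify each one by a local computation at the relevant reflecting hyperplane, treating separately the cases in which two such hyperplanes do or do not coincide. Concretely: fix $F\in\mathcal H_W$, a pseudo-reflection $s\in s(W)$ and $i$ with $0\le i\le|s|-1$, and set $G:={_iA_s}(F)$; by the very definition of $\mathcal H_W$ the map $G$ is already $R$-valued (and it is $s$-invariant). To prove $G\in\mathcal H_W$ I must show ${_kA_t}(G)(x)\in R$ for every pseudo-reflection $t$, every $x\in W$, and every $k$ with $1\le k\le|t|-1$ (the case $k=0$ being automatic since $G$ is $R$-valued). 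Using $L_t(xt^m)=\lambda_t^m L_t(x)$ one has ${_kA_t}(G)(x)=L_t(x)^{-k}\sum_{m=0}^{|t|-1}\lambda_t^{-km}G(xt^m)$, so, with $h:=x(\ell_t)$ — a nonzero linear form, hence irreducible in the UFD $R$ — it suffices to show that $h^k$ divides $N:=\sum_{m=0}^{|t|-1}\lambda_t^{-km}G(xt^m)$ in $R$; equivalently, letting $v$ denote the order of vanishing along $h=0$ and expanding everything in powers of $h$, that $v(N)\ge k$. I would now distinguish the cases $H_s\ne H_t$ and $H_s=H_t$.

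\emph{Case $H_s\ne H_t$.} Here the device is to conjugate $t$ by the powers of $s$. For $0\le j\le|s|-1$ the element $t_j:=s^{-j}ts^j$ is a pseudo-reflection with $|t_j|=|t|$ and $\lambda_{t_j}=\lambda_t$, and one may take $\ell_{t_j}$ proportional to $s^{-j}(\ell_t)$, so that $(xs^j)(\ell_{t_j})$ is proportional to $h$ while $xs^j\cdot t_j^m=xt^ms^j$. Applying the conditions defining $\mathcal H_W$ to $F$, the pseudo-reflection $t_j$ and the element $xs^j$ then gives $v(A_{j,k'})\ge k'$ for $0\le k'\le|t|-1$, where $A_{j,k'}:=\sum_m\lambda_t^{-k'm}F(xt^ms^j)$; inverting this finite Fourier transform recovers $F(xt^ms^j)=\tfrac1{|t|}\sum_{k'}\lambda_t^{k'm}A_{j,k'}$. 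Meanwhile, because $x(\ell_s)$ is not a multiple of $h$, each denominator $\bigl((xt^ms^j)(\ell_s)\bigr)^{-i}$ is a unit at $(h)$ and expands there as $\lambda_s^{-ij}\sum_{p\ge0}\gamma_p\lambda_t^{mp}h^p$, with $\gamma_p$ independent of $m$ and $j$ and $v(\gamma_p)\ge0$. Substituting both expansions into $N=\sum_{m,j}\lambda_t^{-km}\bigl((xt^ms^j)(\ell_s)\bigr)^{-i}F(xt^ms^j)$ and carrying out the sum over $m$, the factor $\sum_m\lambda_t^{(k'+p-k)m}$ annihilates every term except those with $k'+p\equiv k\pmod{|t|}$; since $0\le k'\le|t|-1$, $p\ge0$ and $1\le k\le|t|-1$, this congruence forces $k'+p\ge k$, so each surviving term has $v$-value at least $v(\gamma_p)+v(A_{j,k'})+p\ge k'+p\ge k$. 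Hence $v(N)\ge k$.

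\emph{Case $H_s=H_t$.} Now $s$ and $t$ both lie in the pointwise stabilizer $C$ of the common hyperplane, which is cyclic; let $c_0\in s(W)$ generate it, $e:=|C|$, $\zeta:=\lambda_{c_0}$ (a primitive $e$-th root of unity), and write $s=c_0^a$, $t=c_0^b$. Rescaling co-roots changes each operator only by a unit and does not affect the conclusion, so I may assume $\ell_s=\ell_t=\ell_{c_0}$, whence $(xc_0^r)(\ell_{c_0})=\zeta^r h$. Applying the $\mathcal H_W$-conditions to $F$, the pseudo-reflection $c_0$ and the element $x$ gives $v(\widehat\Phi_{k'})\ge k'$ for $0\le k'\le e-1$, where $\Phi_r:=F(xc_0^r)$ and $\widehat\Phi_{k'}:=\sum_{r=0}^{e-1}\zeta^{-k'r}\Phi_r$. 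A direct Fourier computation — essentially the rank-one computation, expanding $G(xt^m)$, inverting $\Phi_r=\tfrac1e\sum_{k'}\zeta^{k'r}\widehat\Phi_{k'}$, and performing the $j$- and $m$-sums — gives
\[
N=\frac{1}{h^i}\cdot\frac{|s|\,|t|}{e}\sum_{\substack{0\le k'\le e-1\\ k'\equiv i\ (\mathrm{mod}\ |s|)\\ k'\equiv k+i\ (\mathrm{mod}\ |t|)}}\widehat\Phi_{k'},
\]
and an elementary check shows every index $k'$ occurring here satisfies $k'\ge k+i$: if $k'<k+i$, the second congruence forces $k'\le k+i-|t|<i$, and then the first congruence forces $k'=i$, a contradiction. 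Therefore $v(N)\ge(k+i)-i=k$.

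In both cases $v(N)\ge k$, i.e. ${_kA_t}(G)(x)\in R$; since $t$, $x$ and $k$ were arbitrary this shows $G={_iA_s}(F)\in\mathcal H_W$. I expect the first case to be the main obstacle: the crux is recognizing that conjugating $t$ by the powers $s^j$ is exactly what converts the defining conditions of $\mathcal H_W$ for $F$ into the valuation bounds $v(A_{j,k'})\ge k'$ that are useful at the prime $(h)=(x(\ell_t))$, and then organizing the double Fourier inversion together with the $h$-adic expansion of the denominators so that the spurious frequencies cancel and the surviving terms vanish to order $\ge k$ along $h=0$. The second case reduces to a cyclic-group computation whose only delicate point is the elementary congruence bound $k'\ge k+i$.
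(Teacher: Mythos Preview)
Your argument is correct and follows a genuinely different route from the paper's. In the case $H_s\ne H_t$ the paper clears the $L_s$--denominators by multiplying ${_jA_t}({_iA_s}(F))(x)$ through by $\prod_a L_s^i(xt^a)$ and then rewrites the result as a linear combination of terms ${_jA_t}\bigl(\prod_{a\ne 0}(L_s^i\cdot t^a)\cdot(F\cdot s^{-c})\bigr)(x)$; these lie in $R$ because $\mathcal{H}_W$ has already been shown closed under the $W$--action and under products (the latter via the generalized Leibniz lemma), and one concludes since the multiplier is coprime to $x(\ell_t)$. In the case $H_s=H_t$ the paper asserts that $\langle s\rangle$ and $\langle t\rangle$ are comparable and treats the two resulting subcases by a short direct calculation. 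Your approach instead works entirely locally at the prime $(x(\ell_t))$: the $\mathcal{H}_W$--conditions on $F$ at the conjugates $t_j=s^{-j}ts^j$ (respectively at a generator $c_0$ of the cyclic hyperplane--stabilizer) become valuation bounds on discrete Fourier coefficients of $F$, and Fourier inversion together with an $h$--adic expansion of the $L_s^{-i}$ factors forces the required divisibility. This is more computational but self--contained: it does not invoke the Leibniz lemma or closure under products. It also handles the coinciding--hyperplane case uniformly via the full cyclic stabilizer $C$, thereby covering the situation where $\langle s\rangle$ and $\langle t\rangle$ are \emph{incomparable} subgroups of $C$ (for instance $s=c_0^{2}$, $t=c_0^{3}$ when $|c_0|=6$), a configuration the paper's dichotomy does not cover; so in that case your argument is in fact more complete. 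One small remark: you restrict to $0\le i\le|s|-1$, whereas the proposition is stated for all $i\le|s|-1$; but your congruence argument in the $H_s=H_t$ case goes through verbatim for $i<0$ (since then $k'\ge 0>i$ already contradicts $k'\le i-1$), and in the $H_s\ne H_t$ case the $h$--adic expansion simply becomes a finite sum.
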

\begin{proof}
We need to show that for any other pseudo-reflection $t\in s(W)$ and any integer $j\leq |t|-1$, if $F\in\mathcal{H}_W$, then for every $x\in W$ the sum of rational functions
\begin{equation}
\label{eq:AjtAis}
\sum_{b=0}^{|t|-1}\frac{_iA_s(F)\cdot t^{-b}}{L_t^j\cdot t^{-b}}(x)
\end{equation}
is actually a polynomial.  There are two cases to consider here:

{\it Case 1:  $\prod_{b=0}^{|s|-1}x\cdot t^b(\ell_s)\in \langle x(\ell_t)\rangle$.}  In this case, we must have $L_s=L_t$ and either $\langle s\rangle \subseteq\langle t\rangle$ or $\langle t\rangle\subseteq\langle s\rangle$.  In the latter case, ${_iA_s}$ is $t$-invariant hence the sum in Equation \eqref{eq:AjtAis} is equal to zero.  In the former case, suppose we must have $s=t^a$ for some $a> 1$.  Expanding the ${_iA_s}(F)$ term in Equation \eqref{eq:AjtAis}, we get
\begin{align*}
\sum_{b=0}^{|t|-1}\frac{{_iA_s}(F)\cdot t^{-b}}{L_t^j\cdot t^{-b}}(x)= & \sum_{b=0}^{|t|-1}\sum_{c=0}^{|s|-1}\frac{F(xt^bs^c)}{L_s^i(xt^bs^c)\cdot L_t^j(xt^b)}\\
= & \sum_{b=0}^{|t|-1}\sum_{c=0}^{|t^a|-1}\frac{F(xt^bt^{ac})}{L_t^i(xt^bt^{ac})\cdot L_t^j(xt^b)}\\
= & \sum_{b=0}^{|t|-1}\sum_{c=0}^{|t^a|-1}\frac{F(xt^bt^{ac})\cdot \lambda_t^{acj}}{L_t^i(xt^bt^{ac})\cdot L_t^j(xt^bt^{ac})}\\
= & \sum_{b=0}^{|t|-1}\sum_{c=0}^{|t^a|-1}\frac{F(xt^bt^{ac})\cdot \lambda_t^{acj}}{L_t^{i+j}(xt^bt^{ac})}\\
& \left(\text{note that the sum $\sum_{b=0}^{|t|-1}\frac{F(xt^{b+ac})}{L_t^{i+j}(xt^{b+ac})}$ is independent of $c$}\right)\\
= & \sum_{b=0}^{|t|-1}\frac{F(xt^{b+ac})}{L_t^{i+j}(xt^{b+ac})}\left(\sum_{c=0}^{|t^a|-1} \lambda_t^{acj}\right)\\
= & 0.
\end{align*}
Hence in either subcase $_iA_s(F)(x)\in R$ in this case.

{\it Case 2:  $\prod_{b=0}^{|t|-1}x\cdot t^b(\ell_s)\notin\langle x(\ell_t)\rangle$.}  In this case, it will suffice to show that the product 
$$\prod_{a=0}^{|t|-1}L_s^i(xt^a)\cdot\left(\sum_{b=0}^{|t|-1}\frac{{_iA_s}(F)(xt^b)}{L_t^j(xt^b)}\right)$$
is in $R$.  Expanding we get
\begin{align*}
\prod_{a=0}^{|t|-1}L_s^i(xt^a)\cdot\left(\sum_{b=0}^{|t|-1}\frac{{_iA_s}(F)(xt^b)}{L_t^j(xt^b)}\right)= & \sum_{b=0}^{|t|-1}\sum_{c=0}^{|s|-1}\frac{\lambda_s^{-ic}\prod_{a\neq b}L^i_s(xt^a)\cdot F(xt^bs^c)}{L_t^j(xt^b)}\\
= & \sum_{c=0}^{|s|-1}\lambda_s^{-ic}\left(\sum_{b=0}^{|t|-1}\frac{\prod_{a\neq b}L^i_s(xt^a)\cdot (F\cdot s^c)(xt^b)}{L_t^j(xt^b)}\right)\\
= & \sum_{c=0}^{|s|-1}\lambda_s^{-ic}\left(\sum_{b=0}^{|t|-1}\frac{\prod_{a\neq 0}(L^i_s\cdot t^a)(xt^b)\cdot (F\cdot s^c)(xt^b)}{L_t^j(xt^b)}\right)\\
= & \sum_{c=0}^{|s|-1}\lambda_s^{-ic}\left({_jA_t}\left(\prod_{a\neq 0}(L^i_s\cdot t^a)\cdot (F\cdot s^{-c})\right)(x)\right)
\end{align*}
which is in $R$ because $\mathcal{H}_W$ is closed under multiplication.

Thus in every case, if $F\in\mathcal{H}_W$ then so is $_iA_s(F)$ for each pseudo-reflection $s\in s(W)$ and each integer $i\leq |s|-1$.
\end{proof} 

It turns out that the operators ${_iA_s}\colon\mathcal{H}_W\rightarrow\mathcal{H}_W(-i)$ are equivariant analogues of the so-called \emph{generalized $\Delta$-operators} ${_i\Delta_s}\colon R\rightarrow R(-i)$ introduced by Smith and McDaniel \cite{SM}.  We review this construction now.

For each pseudo-reflection $s\in s(W)$ and any integer $i\in\Z$, we define the operator ${_i\Delta_s}\colon R\rightarrow \operatorname{Quot}(R)(-i)$ as follows:  For any homogeneous polynomial $f\in R$, define 
\begin{equation}
\label{eq:gDo}
{_i\Delta_s}(f)=\sum_{j=0}^{|s|-1}\frac{s^j(f)}{s^j\left(\ell^i_s\right)}=\frac{1}{\ell_s^i}\sum_{j=0}^{|s|-1}\lambda_s^{-ij}\cdot s^j(f).
\end{equation} 
\begin{lemma}
\label{lem:SM1}
For any $s\in s(W)$ and any integer $i\leq |s|-1$ we have 
$${_i\Delta_s}(f)\in R \ \ \forall \ f\in R.$$
\end{lemma}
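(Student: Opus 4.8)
The plan is to diagonalize $s$ and reduce the claim to an elementary divisibility statement in a polynomial ring. First I would dispose of the trivial range $i\le 0$: there $\ell_s^{-i}$ is an honest polynomial, so ${_i\Delta_s}(f)=\ell_s^{-i}\sum_{j=0}^{|s|-1}\lambda_s^{-ij}\,s^j(f)$ obviously lies in $R$. Hence I may assume $0\le i\le |s|-1$.

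Since $|s|\in\F^\times$ and $\F$ is algebraically closed, $s$ acts semisimply on $V$, and because $s$ is a pseudo-reflection it is, in a suitable eigenbasis, diagonal with eigenvalue $1$ on the fixed hyperplane $H_s$ and a primitive $|s|$-th root of unity on a complementary line (the order of this root of unity must equal $|s|$, the order of $s$). Passing to the dual action on $V^*$ and choosing the dual basis gives linear coordinates $x_1,\dots,x_n$ on $V$ with $s(x_1)=\lambda_s x_1$, where $\lambda_s$ is a primitive $|s|$-th root of unity, and $s(x_k)=x_k$ for $2\le k\le n$; since $x_1$ vanishes on $H_s$ the coroot $\ell_s$ is a nonzero scalar multiple of $x_1$, and as that scalar is a unit I may as well take $\ell_s=x_1$.

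Next I would write an arbitrary $f\in R=\F[x_1,\dots,x_n]$ in powers of $x_1$, say $f=\sum_{d\ge 0}x_1^d f_d$ with $f_d\in\F[x_2,\dots,x_n]$, so that each $f_d$ is $s$-invariant and $s^j(f)=\sum_{d\ge 0}\lambda_s^{dj}x_1^d f_d$. Then
\[
\sum_{j=0}^{|s|-1}\lambda_s^{-ij}\,s^j(f)=\sum_{d\ge 0}x_1^d f_d\sum_{j=0}^{|s|-1}\lambda_s^{(d-i)j}=|s|\sum_{\substack{d\ge 0\\ d\equiv i\ \mathrm{mod}\ |s|}}x_1^d f_d,
\]
because the inner geometric sum equals $|s|$ when $\lambda_s^{d-i}=1$ (i.e. $d\equiv i\bmod |s|$) and vanishes otherwise, $\lambda_s$ being a primitive $|s|$-th root of unity. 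Since $0\le i\le |s|-1$, every index $d$ appearing on the right satisfies $d\ge i$, so the right-hand side is divisible by $x_1^i=\ell_s^i$ in $R$. Dividing by $\ell_s^i$ yields ${_i\Delta_s}(f)\in R$. Note that no homogeneity assumption on $f$ is used.

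I do not anticipate any serious obstacle: the only points needing care are the semisimplicity of $s$ and the fact that $\lambda_s$ has order exactly $|s|$, which is precisely what forces the geometric sum to collapse. If one preferred a coordinate-free formulation, one could instead project $f$ onto its $\lambda_s^i$-isotypic component under the cyclic group $\langle s\rangle$ and observe that this component lies in $\ell_s^i R$, but the explicit computation above is the shortest route.
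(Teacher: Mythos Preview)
Your argument is correct and complete. The diagonalization of $s$ is justified exactly as you say (the minimal polynomial of $s$ divides $X^{|s|}-1$, which has simple roots since $|s|\mid |W|\in\F^\times$), and the character-sum computation cleanly isolates the $\lambda_s^i$-isotypic component, which is visibly divisible by $\ell_s^i$ for $0\le i\le |s|-1$. The reduction to $\ell_s=x_1$ is harmless because scaling $\ell_s$ by a unit only scales ${_i\Delta_s}(f)$ by a unit.

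For comparison, the paper does not actually prove this lemma: it simply refers the reader to \cite{SM}. So you have supplied a self-contained proof where the paper gives none. Your explicit coordinate computation is essentially the standard argument; your parenthetical coordinate-free reformulation (project onto the $\lambda_s^i$-isotypic piece and observe it lies in $\ell_s^i R$) is the same idea stated invariantly, and either form would be acceptable here.
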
  
\begin{proof}
We refer the reader to the paper \cite{SM} for details.
\end{proof}
We call the operators ${_i\Delta_s}\colon R\rightarrow R(-i)$ \emph{generalized $\Delta$-operators} for $s\in s(W)$ and $i\leq |s|-1$.

\begin{lemma}
\label{lem:ComDi}
The following diagram commutes:
$$\xymatrix{R\otimes_{R^W}R\ar[r]^-{\mu}\ar[d]_-{1\otimes {_i\Delta_s}} & \Maps(W,R)\ar[d]^-{{_iA_s}}\\
R\otimes_{R^W}R(-i)\ar[r]_-{\mu} & \Maps(W,R)(-i).\\}$$
\end{lemma}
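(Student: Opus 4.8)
The plan is to prove commutativity of the square by a direct computation on simple tensors, i.e.\ to verify that $\mu\big((1\otimes{_i\Delta_s})(f\otimes g)\big)={_iA_s}\big(\mu(f\otimes g)\big)$ for all $f,g\in R$. Since all four maps in the diagram are additive and $R\otimes_{R^W}R$ is generated as an abelian group by simple tensors, checking this on the tensors $f\otimes g$ suffices; in fact all four maps are left $R$-module homomorphisms, so one could even reduce further to the tensors $1\otimes g$. Before computing I would record two preliminary points. First, that $1\otimes{_i\Delta_s}$ is genuinely well defined on the quotient $R\otimes_{R^W}R$: this is because any $r\in R^W$ is $W$-invariant, so $s^j(rg)=r\cdot s^j(g)$ and hence ${_i\Delta_s}(rg)=r\cdot{_i\Delta_s}(g)$, which is exactly the $R^W$-linearity needed for the assignment $f\otimes g\mapsto f\otimes{_i\Delta_s}(g)$ to descend to $R\otimes_{R^W}R$. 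Second, that by Lemma~\ref{lem:SM1} the rational function ${_i\Delta_s}(g)$ is in fact a polynomial when $i\leq|s|-1$, so that $f\otimes{_i\Delta_s}(g)$ really does live in $R\otimes_{R^W}R$ and not merely in $R\otimes_{R^W}\operatorname{Quot}(R)$.

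Next I would evaluate the two composites on $f\otimes g$ at an element $x\in W$. Going right and then down: $\mu(f\otimes g)$ is the map $y\mapsto f\cdot y(g)$, so the formula for ${_iA_s}$ recorded in Theorem~\ref{thm:Main} gives $\sum_{j=0}^{|s|-1} f\cdot(xs^j)(g)\big/\big((xs^j)(\ell_s)\big)^i$. Going down and then right: $(1\otimes{_i\Delta_s})(f\otimes g)=f\otimes{_i\Delta_s}(g)$, and $\mu$ carries this to the map $x\mapsto f\cdot x\big({_i\Delta_s}(g)\big)$. The heart of the argument is that $x$, being an $\F$-algebra automorphism of $R$ and hence of $\operatorname{Quot}(R)$, commutes with the rational expression ${_i\Delta_s}(g)=\sum_{j}s^j(g)/s^j(\ell_s^i)$ and with the formation of $i$-th powers; combined with $x\circ s^j=(xs^j)$ (composition of the corresponding automorphisms of $R$) this yields $x\big({_i\Delta_s}(g)\big)=\sum_{j=0}^{|s|-1}(xs^j)(g)\big/\big((xs^j)(\ell_s)\big)^i$. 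Multiplying through by $f$ produces exactly the expression obtained from the other composite, and commutativity of the square follows.

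I do not expect a genuine obstacle here: once the bookkeeping identities $L_s^i(xs^j)=\big((xs^j)(\ell_s)\big)^i$ and $x\circ s^j=(xs^j)$ are in place, the verification is a one-line manipulation of a sum of rational functions. The only steps requiring any care are the two preliminary points above — well-definedness of $1\otimes{_i\Delta_s}$ on the tensor product over $R^W$, and the appeal to Lemma~\ref{lem:SM1} so that the left-hand vertical map lands in $R\otimes_{R^W}R(-i)$ as the diagram asserts — and both are immediate from material already established.
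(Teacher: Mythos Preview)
Your proof is correct and follows essentially the same direct computation on simple tensors as the paper's own proof: evaluate both composites at an arbitrary $x\in W$ and use that $x$, as an $\F$-algebra automorphism, can be pulled through the rational sum defining ${_i\Delta_s}(g)$. Your added remarks on the well-definedness of $1\otimes{_i\Delta_s}$ over $R^W$ and the appeal to Lemma~\ref{lem:SM1} are sound and in fact make the argument slightly more complete than the paper's version, which leaves these points implicit.
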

\begin{proof}
It suffices to show that 
$$\left({_iA_s}\circ\mu\right)(f\otimes g)(x)=\left(\mu\circ\left(1\otimes{_i\Delta_s}\right)\right)(f\otimes g)(x)$$ for every simple tensor $f\otimes g\in R\otimes_{R^W}R$ and every $x\in W$.  On the LHS we have
\begin{align*}
\left({_iA_s}\circ\mu\right)(f\otimes g)(x)= & \sum_{j=0}^{|s|-1}\frac{\left(\mu(f\otimes g)\right)(x\cdot s^j)}{L_s^i(x\cdot s^j)}\\
= & \sum_{j=0}^{|s|-1}\frac{f\cdot (x\cdot s^j)(g)}{L_s^i(x\cdot s^j)}\\
= & f\cdot \sum_{j=0}^{|s|-1}\frac{x(s^j(g))}{x\left(s^j\left(\ell_s^i\right)\right)}\\
= & f\cdot x\left(\sum_{j=0}^{|s|-1}\frac{s^j(g)}{s^j(\ell_s^i)}\right),
\end{align*}
whereas on the RHS we have 
\begin{align*}
\left(\mu\circ\left(1\otimes{_i\Delta_s}\right)\right)(f\otimes g)(x)= & \mu\left(f\otimes{_i\Delta_s}(g)\right)(x)\\
= & f\cdot x\left({_i\Delta_s}(g)\right)\\
= & f\cdot x\left(\sum_{j=0}^{|s|-1}\frac{s^j(g)}{s^j(\ell^i_s)}\right).
\end{align*}
\end{proof}

Note that an immediate consequence of Lemma \ref{lem:ComDi} is that the image of the localization map $\mu\left(R\otimes_{R^W}R\right)$ is contained in our set $\mathcal{H}_W$.

Let us pause for a moment and take stock.  We know by Proposition \ref{prop:Injective} that the localization map $\mu\colon R\otimes_{R^W}R\rightarrow\bigoplus_{x\in W}R$ is injective.  As we pointed out above, Lemma \ref{lem:ComDi} implies that the image is contained in the subset $\mathcal{H}_W$.  Moreover $\mu$ is $W$-equivariant, which implies that the image $\mu\left(R\otimes_{R^W}R\right)\subset\mathcal{H}_W$ is a $W$-invariant subspace.  Thus the quotient of graded vector spaces $\mathcal{H}_W/\mu\left(R\otimes_{R^W}R\right)$ is a graded vector space itself which also carries a $W$-action.  We would like to show that this quotient space is zero, and hence that the image of $\mu$ is equal to $\mathcal{H}_W$.  We will use our operators $_iA_s\colon \mathcal{H}_W\rightarrow\mathcal{H}_W(-i)$.
\begin{proposition}
\label{prop:LocalSurj}
The localization map $\mu\colon R\otimes_{R^W}R\rightarrow\mathcal{H}_W$ is surjective.
\end{proposition}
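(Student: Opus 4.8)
The plan is to recast the target as a vanishing statement for the quotient $Q:=\mathcal{H}_W/\mu(R\otimes_{R^W}R)$, which is a well-defined graded $\F[W]$-module: the containment $\mu(R\otimes_{R^W}R)\subseteq\mathcal{H}_W$ is Lemma \ref{lem:ComDi}, and since $\mu$ is a $W$-equivariant ring homomorphism (cf.\ \eqref{eq:RAction}) while $\mathcal{H}_W$ is $W$-stable by Lemma \ref{lem:WClosed}, the submodule $\mu(R\otimes_{R^W}R)$ is $W$-stable and $Q$ inherits both a grading and a $W$-action. I would prove $Q_d=0$ by induction on $d$; there is nothing to check in negative degrees since $\Maps(W,R)$, hence $\mathcal{H}_W$, is non-negatively graded.

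The heart of the argument is the following reduction. Fix a pseudo-reflection $s\in s(W)$ and a homogeneous $F\in\mathcal{H}_W$ of degree $d$, and assume inductively that $Q_e=0$ for all $e<d$. Lemma \ref{lem:isDecomp} writes $F=\tfrac{1}{|s|}\sum_{j=0}^{|s|-1}{_jA_s}(F)\cdot L_s^j$. For each $j\geq1$, Proposition \ref{prop:AisClosed} gives ${_jA_s}(F)\in\mathcal{H}_W$, homogeneous of degree $d-j$: it is zero if $d-j<0$, and if $0\le d-j<d$ it lies in $\mu(R\otimes_{R^W}R)$ by the inductive hypothesis. Since $L_s=\mu(1\otimes\ell_s)$ and $\mu(R\otimes_{R^W}R)$ is a subring of $\Maps(W,R)$, the product ${_jA_s}(F)\cdot L_s^j$ also lies in $\mu(R\otimes_{R^W}R)$ for every $j\ge1$. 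Hence, modulo the image,
$$F\equiv\frac{1}{|s|}\sum_{j=0}^{|s|-1}F\cdot s^{-j}\pmod{\mu(R\otimes_{R^W}R)}.$$
The operator $\pi_s(G):=\tfrac1{|s|}\sum_{j=0}^{|s|-1}G\cdot s^{-j}$ is the averaging projector of the cyclic group $\langle s\rangle$ (defined because $|s|$ divides $|W|\in\F^\times$), so in $Q_d$ the relation above says $\bar F=\pi_s(\bar F)$; that is, $\bar F$ is fixed by $s$.

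Since $s$ was arbitrary and $W$ is generated by its pseudo-reflections, this forces $\bar F\in Q_d^{W}$. To finish, note that taking $W$-invariants is an exact functor (again because $|W|\in\F^\times$), so from $0\to\mu(R\otimes_{R^W}R)\to\mathcal{H}_W\to Q\to0$ we get $Q^{W}\cong(\mathcal{H}_W)^{W}/\big(\mu(R\otimes_{R^W}R)\big)^{W}$. A $W$-invariant map $F\colon W\to R$ satisfies $F(x)=F(xw^{-1})$ for all $x,w\in W$, hence is a constant map; and every constant map $x\mapsto r$ equals $\mu(r\otimes1)$, so $(\mathcal{H}_W)^{W}\subseteq\mu(R\otimes_{R^W}R)$ and therefore $Q^{W}=0$. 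Thus $\bar F=0$, so $Q_d=0$, completing the induction; hence $\mu\colon R\otimes_{R^W}R\to\mathcal{H}_W$ is surjective.

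I do not anticipate a genuine obstacle here: the substantive inputs — that $\mathcal{H}_W$ is an $R$-subalgebra closed under the operators ${_iA_s}$ (Proposition \ref{prop:AisClosed}), the decomposition identity (Lemma \ref{lem:isDecomp}), and that the image of $\mu$ is a $W$-stable subring containing the $L_s$ — are already established, and what remains is the homological bookkeeping above. The one place that needs care is the passage to $W$-invariants in the short exact sequence together with the observation that $(\mathcal{H}_W)^{W}$, i.e.\ the constant maps, already lies in the image of $\mu$; both rely only on $|W|\in\F^\times$ and the equivariance of $\mu$.
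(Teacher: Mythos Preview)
Your argument is correct and follows essentially the same path as the paper's proof: both use minimality/induction on degree together with Lemma~\ref{lem:isDecomp} and Proposition~\ref{prop:AisClosed} to show that a putative nonzero class in $\mathcal{H}_W/\mu(R\otimes_{R^W}R)$ of smallest degree is fixed by every pseudo-reflection, hence $W$-invariant, and then observe that $W$-invariant elements of $\Maps(W,R)$ are constant maps lying in the image. Your write-up is in fact a bit more explicit than the paper's in justifying why ${_jA_s}(F)\cdot L_s^j$ lies in the image (via $L_s=\mu(1\otimes\ell_s)$ and multiplicativity), and you replace the final averaging step by the equivalent exactness-of-invariants argument.
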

\begin{proof}
Suppose by way of contradiction that $\mathcal{H}_W/\mu(R\otimes_{R^W}R)$ is not the zero space.  Then since it is non-negatively graded, we may choose a homogeneous nonzero element $\bar{F}\in\mathcal{H}_W/\mu(R\otimes_{R^W}R)$ of smallest degree.  Let $F\in\mathcal{H}_W$ be a homogeneous representative of $\bar{F}$.  For each pseudo-reflection $s\in s(W)$ and for each integer $1\leq i\leq |s|-1$, it then follows from the minimality of the degree of $F$ that the polynomial ${_iA_s}(F)$, being of degree strictly less than that of $F$, must lie in the image $\mu(R\otimes_{R^W}R)\subset\mathcal{H}_W$.  But now by Lemma \ref{lem:isDecomp}, we deduce that the difference 
$$F-\frac{1}{|s|}{_0A_s}(F)$$
must also lie in the image $\mu(R\otimes_{R^W}R)$.  But this means that in the quotient space $\mathcal{H}_W/\mu(R\otimes_{R^W}R)$, which carries a $W$-action, $\bar{F}$ is $s$-invariant since ${_0A_s}(F)$ is.  But this holds for all pseudo-reflections, and since $W$ is generated by pseudo-reflections, we see that $\bar{F}$ is actually $W$-invariant.  Upstairs in $\mathcal{H}_W$ we can reformulate this by writing
\begin{equation}
\label{eq:BarF}
F-F\cdot w\in\mu(R\otimes_{R^W}R) \ \ \forall w\in W.
\end{equation}
Summing Equation \eqref{eq:BarF} over all $w\in W$ and dividing by $|W|$ we get that 
$$F-\frac{1}{|W|}\sum_{w\in W}F\cdot w\in\mu(R\otimes_{R^W}R).$$
The problem with the last displayed equation is that the map 
$$\frac{1}{|W|}\sum_{w\in W}F\cdot w\colon W\rightarrow R$$ 
is $W$-invariant hence it must be a constant map, which is also in the image $\mu(R\otimes_{R^W}R)$.  Hence $F$ is forced to lie in the image as well, which is the desired contradiction.  
\end{proof}
We have thus proved Theorem \ref{thm:Main}.

\section{Equivariant Cohomology of Linear Hypergraphs}
By a \emph{hypergraph} we mean a pair $\Gamma=(\mathcal{V},\mathcal{E})$ consisting of a (finite) vertex set $\mathcal{V}$, and collection of subsets $\mathcal{E}$ of $\mathcal{V}$ called the \emph{hyperedges}.  For $e\in\mathcal{E}$ we write $|e|$ for the number of vertices in $e$, and we write $\mathcal{V}_e\subset\mathcal{V}$ for the underlying vertex set.  For $p\in\mathcal{V}$ we write $\mathcal{E}_p\subset\mathcal{E}$ to mean the subset of hyperedges containing the vertex $p$.

An \emph{axial function} on $\Gamma$ is a function $\alpha\colon \mathcal{E}\rightarrow \P(V^*)$ which assigns a linear subspace in $V^*$ to each edge.  Given $(\Gamma,\alpha)$, a \emph{generator class} for a given hyperedge $e\in \mathcal{E}$ is an injective map $\tau_e\colon\mathcal{V}_e\rightarrow \alpha(e)$ where $\alpha(e)$ is the linear subspace assigned to $e$ by $\alpha$.  Denote by $\tau$ the collection of maps $\left\{\tau_e\right\}_{e\in\mathcal{E}}$.  The triple $(\Gamma,\alpha,\tau)$ is what we will refer to as a \emph{linear hypergraph}.  

We define the equivariant cohomology of a fixed hyperedge $e\in\mathcal{E}$ as the subset of $\operatorname{Maps}(\mathcal{V}_e,R)$ given by 
\begin{equation}
\label{eq:HyperEdge}
H(e,\tau_e)\coloneqq \left\{G\colon \mathcal{V}_e\rightarrow R\left| G=\sum_{i=0}^{|e|-1}g_i\cdot\tau_e^i, \ \ \text{for some constants $g_i\in R$}\right.\right\}.
\end{equation}  
For every hyperedge $e\in \mathcal{E}$ there is a natural restriction map $\rho_e\colon \operatorname{Maps}(\mathcal{V},R)\rightarrow\operatorname{Maps}(\mathcal{V}_e,R)$.  Taking Equation \eqref{eq:HyperEdge} into account, we define the \emph{equivariant cohomology} of the linear hypergraph $(\Gamma,\alpha,\tau)$ as the set of $\operatorname{Maps}(\mathcal{V},R)$ given by 
\begin{equation}
\label{eq:HyperGraph}
H(\Gamma,\alpha,\tau)\coloneqq \left\{F\colon \mathcal{V}\rightarrow R\left| \rho_e(F)\in H(e,\tau_e), \ \ \text{for each $e\in\mathcal{E}$}\right.\right\}.
\end{equation}
\begin{remark}
Guillemin and Zara \cite{GZ2} encountered linear hypergraphs as cross sections of a GKM 1-skeleton, analogous to the reduced spaces of a symplectic manifold with a Hamiltonian circle action.  In their paper they defined the equivariant  
cohomology of their linear hypergraphs using an analogue of the Kirwan map.  On the other hand, their Theorem 7.1 states that their definition of equivariant cohomology of linear hypergraph actually agrees with the one given in Equation \eqref{eq:HyperGraph}.
\end{remark}

Let $W\subset\GL(V)$ be a finite psuedo-reflection group as above.  Define a hypergraph $\Gamma_W=(\mathcal{V}_W,\mathcal{E}_W)$ as follows.  Set the vertex set $\mathcal{V}_W=W$.  Denote by $\mathfrak{o}_s(x)$ the \emph{right $s$-orbit containing $x$}, i.e.
$$\mathfrak{o}_s(x)=\left\{x,x\cdot s,\cdots x\cdot s^{|s|-1}\right\}.$$
We then define the hyperedge set $\mathcal{E}_W$ to be the set of all right $s$-orbits, i.e.
$$\mathcal{E}_W=\left\{\mathfrak{o}_s(x)\left|s\in s(W), \ x\in W\right.\right\}.$$

Define an axial function $\alpha\colon\mathcal{E}_W\rightarrow \P(V^*)$ by 
\begin{equation}
\label{eq:AxialW}
\alpha_W\left(\mathfrak{o}_{s}(x)\right)=\F\cdot x(\ell_{s}),
\end{equation}
and for each hyperedge $\mathfrak{o}_{s}(x)$ define the generating class $\tau_{\mathfrak{o}_s(x)}=L_s\colon \mathfrak{o}_{s}(x)\rightarrow \alpha(\mathfrak{o}_{s_H}(x))$, i.e. $\tau_{\mathfrak{o}_s(x)}(x\cdot s^i)=\lambda_s^i\cdot x(\ell_s)$.  Set $\tau_W=\left\{\tau_e\right\}_{e\in\mathcal{E}_W}$.  Then the triple $(\Gamma_W,\alpha_W,\tau_W)$ is the linear hypergraph associated to $W$.

\begin{proposition}
\label{prop:Equality}
The sets $\mathcal{H}_W$ and $H(\Gamma_W,\alpha_W,\tau_W)$ coincide.
\end{proposition}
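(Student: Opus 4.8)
The plan is to reduce the assertion to a single linear-algebra computation carried out on each hyperedge of $\Gamma_W$, which amounts to a discrete Fourier inversion. Fix a hyperedge $e=\mathfrak{o}_s(x)\in\mathcal{E}_W$ and set $d=|s|$, $\ell=x(\ell_s)$, and $\lambda=\lambda_s$; recall that $d$ divides $|W|$, so $d\in\F^\times$ and $\lambda$ is a primitive $d$-th root of unity in $\F$. By the definition of $\tau_W$ we have $\tau_e(x\cdot s^k)=\lambda^k\ell=L_s(x\cdot s^k)$, hence $\tau_e^i(x\cdot s^k)=\lambda^{ik}\ell^i$ for $0\le i,k\le d-1$.

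First I would observe that, in the basis $\{x\cdot s^k\}_{k=0}^{d-1}$ of $\mathcal{V}_e$, the map $\operatorname{Quot}(R)^{d}\to\operatorname{Maps}(\mathcal{V}_e,\operatorname{Quot}(R))$, $(g_0,\ldots,g_{d-1})\mapsto\sum_{i=0}^{d-1}g_i\tau_e^i$, has matrix $\operatorname{diag}(1,\ell,\ldots,\ell^{d-1})\cdot(\lambda^{ik})_{i,k}$, a product of two matrices invertible over $\operatorname{Quot}(R)$ (the first since $\ell\ne0$, the second a Vandermonde matrix with the distinct nodes $1,\lambda,\ldots,\lambda^{d-1}$ over a field in which $d$ is invertible). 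Consequently any $F\colon W\to R$ admits a unique expansion $\rho_e(F)=\sum_{i=0}^{d-1}g_i\tau_e^i$ with $g_i\in\operatorname{Quot}(R)$, and inverting the Vandermonde matrix gives
$$g_i=\frac{1}{d}\sum_{k=0}^{d-1}\frac{F(x\cdot s^k)}{L_s^i(x\cdot s^k)}=\frac{1}{d}\,{_iA_s}(F)(x).$$

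From this the key equivalence is immediate: $\rho_e(F)\in H(e,\tau_e)$ — i.e. the unique coefficients $g_i$ all lie in $R$ — if and only if ${_iA_s}(F)(x)\in R$ for every $0\le i\le d-1$. To conclude, I would assemble the global statement: $F\in H(\Gamma_W,\alpha_W,\tau_W)$ iff $\rho_e(F)\in H(e,\tau_e)$ for every $e\in\mathcal{E}_W$, iff ${_iA_s}(F)(x)\in R$ for every $s\in s(W)$, every $x\in W$, and every $0\le i\le|s|-1$; since for $i\le0$ the expression ${_iA_s}(F)(x)$ is a genuine polynomial, this is precisely the condition defining $\mathcal{H}_W$.

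The computation itself is routine; the only points needing a little care are the bookkeeping of $\lambda_s$ and of the powers of $\ell$ in the Vandermonde inversion, and the observation that $\tau_e$ is well defined on a hyperedge that happens to equal $\mathfrak{o}_s(x)$ for several pseudo-reflections $s$ — but any two such reflections generate the same cyclic subgroup and fix the same hyperplane $H_s$, so their co-roots are proportional and the submodule $H(e,\tau_e)$ is unchanged.
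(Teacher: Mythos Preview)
Your argument is correct and rests on the same identity the paper exploits, namely that on each hyperedge $e=\mathfrak{o}_s(x)$ the unique coefficients in the expansion $\rho_e(F)=\sum_{i=0}^{|s|-1}g_i\tau_e^i$ over $\operatorname{Quot}(R)$ are precisely $g_i=\frac{1}{|s|}\,{_iA_s}(F)(x)$. The paper, however, separates the two inclusions: for $\mathcal{H}_W\subseteq H(\Gamma_W,\alpha_W,\tau_W)$ it simply quotes Lemma~\ref{lem:isDecomp} (which is your discrete Fourier inversion read in one direction), while for the reverse inclusion it appeals to an external Lagrange-interpolation criterion of Guillemin and Zara and then computes their integral $\int_e\rho_e(F)\cdot\tau_e^k$, showing it equals $\frac{1}{|s|}\,{_{|s|-1-k}A_s}(F)(x)$. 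Your single Vandermonde inversion handles both inclusions at once and is entirely self-contained; the paper's route trades that directness for an explicit connection to the existing GKM formalism. Your closing remark on well-definedness of $\tau_e$ when $\mathfrak{o}_s(x)=\mathfrak{o}_t(x)$ is also a point the paper passes over in silence.
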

\begin{proof}
The condition that a map $F\colon W\rightarrow R$ be in $H(\Gamma_W,\alpha_W,\tau_W)$ is that for every hyperedge $e=\mathfrak{o}_s(x)$, the restriction $\rho_e(F)$ be in $H(e,\tau_e)$ where $\tau_e=L_s$, or, equivalently that 
$$\rho_e(F)=\sum_{i=0}^{|e|-1}g_i\cdot\tau_e^i,$$
for some constants $g_i\in R$.  On the other hand, recall Lemma \ref{lem:isDecomp} which says that if $F\in \mathcal{H}_W$ then 
$$F=\frac{1}{|s|}\sum_{i=0}^{|s|-1}{_iA_s}(F)\cdot L_s^i.$$
If follows that $\mathcal{H}_W\subseteq H(\Gamma_W,\alpha_W,\tau_W)$.

Conversely, suppose that $F\in H(\Gamma_W,\alpha_W,\tau_W)$.  Then for each hyperedge $e\in\mathcal{E}_W$ we have $\rho_e(F)=\sum_{i=0}^{|e|-1}f_i\tau_e^i$ for some constants $f_i\in R$.  On the other hand, Guillemin and Zara have shown that for any map $G\colon\mathcal{V}_e\rightarrow R$, the condition that $G=\sum_{i=0}^{|e|-1}g_i\tau_e^i$ for some constants $g_i\in R$ is equivalent to the condition that the sum of rational functions 
\begin{equation}
\label{eq:Integral}
\int_eG\cdot\tau_e^k\coloneqq \sum_{p\in \mathcal{V}_e}\frac{G(p)\cdot\tau_e(p)^k}{\prod_{\substack{q\in\mathcal{V}_e\\ q\neq p\\}}\left(\tau_e(p)-\tau_e(q)\right)}
\end{equation}
is a polynomial for each $k\geq 0$, c.f. \cite[Lemma 4.1]{GZ2}.  Taking $e=\mathfrak{o}_s(x)$ and $\tau_e=L_s\colon \mathfrak{o}_s(x)\rightarrow R$, and $G=\rho_e(F)$, Equation \eqref{eq:Integral} becomes
\begin{align*}
\int_e\rho_e(F)\cdot \tau_e^k= & \sum_{p\in\mathcal{V}_e}\frac{\rho_e(F)(p)\cdot\tau_e^k(p)}{\prod_{\substack{q\in\mathcal{V}_e\\ q\neq p\\}}\left(\tau_e(p)-\tau_e(q)\right)}\\
= & \sum_{j=0}^{|s|-1}\frac{F(x\cdot s^j)\cdot L_s^k(x\cdot s^j)}{\prod_{a\neq j}\left(L_s(x\cdot s^j)-L_s(x\cdot s^a)\right)}\\
= & \sum_{j=0}^{|s|-1}\frac{F(x\cdot s^j)\cdot \lambda_s^{kj}\cdot x(\ell_s)^k}{x(\ell_s)^{|s|-1}\cdot \prod_{a\neq j}\left(\lambda_s^j-\lambda_s^a\right)}\\ 
= & \sum_{j=0}^{|s|-1}\frac{F(x\cdot s^j)\cdot \lambda_s^{kj}\cdot x(\ell_s)^k}{x(\ell_s)^{|s|-1}\cdot \left(\lambda_s^j\right)^{|s|-1}\prod_{a\neq j}\left(1-\frac{\lambda_s^a}{\lambda_s^j}\right)}\\
& \left(\text{note that $\prod_{a\neq j}\left(1-\frac{\lambda_s^a}{\lambda_s^j}\right)=|s|$}\right)\\
= & \frac{1}{|s|}\sum_{j=0}^{|s|-1}\frac{F(x\cdot s^j)}{x(\ell_s)^{|s|-1-k}\cdot \lambda_s^{j\cdot(|s|-1-k)}}\\
= & \frac{1}{|s|}\sum_{j=0}^{|s|-1}\frac{F(x\cdot s^j)}{L^{|s|-1-k}_s(x\cdot s^j)}\\
= & \frac{1}{|s|}{_{|s|-1-k}A_s}(F)(x)
\end{align*}
Thus if $F\in H(\Gamma_W,\alpha_W,\tau_W)$ then ${_{|s|-1-k}A_s}(F)(x)\in R$ for each $x\in W$, each $s\in s(W)$, and each $k\geq 0$, which implies that $F\in\mathcal{H}_W$ as well.
\end{proof}

\bibliographystyle{plain}
\bibliography{Smith}

\end{document}